\def\zz{{\mathbb Z}}
\def\fz{\infty}
\def\az{\alpha}
\def\bz{\beta}
\def\lz{\lambda}
\def\Lz{\Lambda}
\def\sz{\sigma}
\def\oz{\omega}
\def\Oz{\Omega}
\def\pz{{\,\oplus\,}}
\def\mz{{\,\ominus\,}}
\def\Pz{{\mathbb{P}}}
\def\nn{{\mathbb{N}}}
\def\lf{\left}
\def\r{\right}
\def\noz{\nonumber}
\def\supp{\mathop\mathrm{\,supp\,}}
\newtheorem{theorem}{Theorem}[section]
\newtheorem{lemma}[theorem]{Lemma}
\theoremstyle{definition}
\newtheorem{remark}[theorem]{Remark}
\newtheorem{definition}[theorem]{Definition}
\renewcommand{\appendix}{\par
   \setcounter{section}{0}%
   \setcounter{subsection}{0}%
   \setcounter{subsubsection}{0}%
   \gdef\thesection{\@Alph\c@section}%
   \gdef\thesubsection{\@Alph\c@section.\@arabic\c@subsection}%
   \gdef\theHsection{\@Alph\c@section.}%
   \gdef\theHsubsection{\@Alph\c@section.\@arabic\c@subsection}%
   \csname appendixmore\endcsname
 }
\numberwithin{equation}{section}
\begin{document}

\arraycolsep=1pt

\title{\bf\Large A Characterization of Wavelet Sets
on Vilenkin Groups with Its Application to Construction of MRA Wavelets\footnotetext {\hspace{-0.35cm}
2020 {\it Mathematics Subject Classification}. Primary 42C40;
Secondary 43A70, 42C10, 22B99.
\endgraf {\it Key words and phrases}. Vilenkin group, wavelet set, multiresolution analysis.
\endgraf
This project is supported by the National
Natural Science Foundation of China (Grant No.~12371102).}}
\author{Jun Liu and Chi Zhang\footnote{Corresponding author, E-mail: zclqq32@cumt.edu.cn}}
\date{}
\maketitle

\vspace{-0.9cm}

\begin{center}
\begin{minipage}{13cm}
{\small {\bf Abstract}\quad
Let $G$ be a Vilenkin group. In 2008, Y. A. Farkov constructed wavelets on $G$ via the
multiresolution analysis method. In this article, a characterization of wavelet sets on $G$ is established,
which provides another method for the construction of wavelets. As an application, the relation between multiresolution analyses and wavelets determined from wavelet sets is also presented.
To some extent, these results positively answer a question mentioned by P. Mahapatra and D. Singh in
[Bull. Sci. Math. 167 (2021), Paper No. 102945, 20 pp].}
\end{minipage}
\end{center}

\section{Introduction}\label{s1}

As a natural generalization of the Cantor dyadic group, the Vilenkin group was first introduced by Vilenkin
in \cite{Vilen47}, which is also a large class of locally compact abelian groups. Actually, Fine \cite{Fine}
and Vilenkin \cite{Vilen47} independently determined that Walsh system is the group of characters of the
Cantor dyadic group. Nowadays, these locally compact abelian groups and their analogue have proved very
useful not only in the development of group theory, but also in many other branches of mathematics,
such as functional analysis (see, for instance, \cite{gi04,jm23,ls09,m17}) and harmonic analysis
(see, for instance, \cite{amw12,bb11,Far08,fg23,Ho21,ly93,ly98,oq89,qy02}).

In particular, Lang \cite{Lang96} studied orthogonal wavelets with compact support on the Cantor dyadic
group, in which, the author used multiresolution analysis (MRA) to construct the desired wavelets and
determined relations for scaling and wavelet filters similar to the case of $L^2(\mathbb{R})$.
With the help of the
theory of Calder\'{o}n-Zygmund integral operators, Lang \cite{Lang98} further showed that, if wavelets
on the Cantor dyadic group satisfy a Lipschitz-type regularity condition, then the convergence of the
wavelet series in $L^r$ is unconditional (that is regardless of the ordering of summation index set)
for any $r>1$.
These results were extended to the Vilenkin group by Farkov in \cite{Far07,Far08}. To be precise,
in \cite{Far08}, the author gave necessary and sufficient conditions, in terms of the modified Cohen's
condition and blocked sets, for refinable functions to generate an MRA in
the $L^2$-space on Vilenkin groups. The Strang-fix condition, the partition of unity property,
the linear independence and the stability of the corresponding refinable (or scaling) functions are
also investigated in \cite{Far08}. For more progresses about wavelet theory on Vilenkin groups, we refer
the reader to \cite{b19,fls15,fs22,l14,l14*,lb15,lbk15,mss23}.

On the other hand, Dai and Larson \cite{dl98} introduced and studied the wavelet set in $\mathbb{R}$,
which was defined as follows: A
measurable subset of $\mathbb{R}$ is called a wavelet set if its characteristic function
is equal to the Fourier transform (or, modulus of Fourier transform) of a wavelet.
Such wavelets were also called $s$-elementary wavelets. These were later extended to higher dimensional
Euclidean space $\mathbb{R}^n$ in \cite{dls97,dls98}.
In real case, wavelet sets has become a powerful tool for the construction of
MRA as well as non-MRA wavelets. Due to the celebrated work \cite{dl98,dls97,dls98} of Dai et al.,
there has been an increasing interest in investigating geometric and topological properties of wavelet
sets and associated wavelets in various underlying spaces; see, for instance,
\cite{bb11,br20,ms21,mss23} and their references. Recently, Mahapatra and Singh \cite{ms21}
considered the wavelet sets on Cantor dyadic groups and their association with MRA; moreover,
Mahapatra and Singh in \cite[p.\,2]{ms21} stated that
``To the best of our knowledge, results related to scaling sets
are not available in case of locally compact abelian groups. Therefore the construction
of scaling sets may be extended from Cantor dyadic group to locally compact abelian
groups".

Motivated by the aforementioned work of \cite{Far08} and \cite{ms21}, in this article, we
study the wavelet sets and their association with MRA on Vilenkin groups. Being more precise,
in Section \ref{s2}, we first give the algebraic and topological structure of the Vilenkin group $G$
along with some existing results on wavelets, and then prove several basic results related to wavelets;
see Lemmas \ref{2l2} through \ref{2l4} below. Section \ref{s3} is devoted to establishing a
characterization of wavelet sets on $G$ (see Theorem \ref{t1} below), whose proof
borrows some ideas from the proof of \cite[Theorem 4]{ms21}. We should point out that the obtained
characterization also can be found in \cite[Theorem 2.4]{mss23}. However, the authors of \cite{mss23}
did not give its proof. For the sake of completeness and to support our claim in Theorem \ref{t2} of
Section \ref{s4}, detailed proofs of Theorem \ref{t1} are given in the present article.
In Section \ref{s4}, using Theorem \ref{t1}, we establish the relation between
MRA and wavelets determined from wavelet sets on $G$. This provides another method, which is different
from that used in \cite{Far08}, for the construction of wavelets.
Observe that Vilenkin groups are locally compact abelian groups. In this sense, we confirm the
aforementioned conjecture proposed by Mahapatra and Singh \cite{ms21} to some extent.

Throughout this article, let $\zz$ be \emph{the set of integers}, $\nn:=\{1,2,\ldots\}$, and $\zz_+:=\{0\}\cup\nn$.
Let $p\in[2,\fz)\cap\zz$ and $\mathbb{P}:=\{1,2,\ldots,p-1\}$.
For any set $E$, we denote by $\mathbf{1}_E$ its \emph{characteristic function}.

\section{Preliminaries}\label{s2}

In this section, we first recall some notations and definitions from \cite{Far08,Vilen47}
which will be used throughout this article and then show several basic results related to
wavelets on Vilenkin groups.

We begin with the notion of Vilenkin groups. Let $G$ be the set of all sequences
$$x=(x_j)_{j\in\zz}=(\ldots,0,0,x_k,x_{k+1},\ldots)$$
satisfying that $x_j\in\{0,1,\ldots,p-1\}$ for any $j\in\zz$ and $x_j=0$ for every
$j\in(-\fz,k_x)\cap\zz$, where $k_x$ is some integer depending on $x$. In what follows,
we always use $x$ or $(x_j)$ to denote $(x_j)_{j\in\zz}$ for convenience.
The group operation on $G$, denoted by $\oplus$, is coordinatewise addition modulo $p$
defined by setting
$$(z_j)=(x_j)\pz(y_j)\Longleftrightarrow{\rm for}\ {\rm any}\ j\in\zz,\
z_j=x_j+y_j({\rm mod}\,p).$$
In addition, denote by $\mz$ the inverse operation of $\pz$.
An automorphism $\rho:G\to G$ is called a dilation if it satisfies that,
for any $x\in G$ and $j\in\zz$, $(\rho(x))_j=x_{j+1}$. Its inverse $\sz$ is defined as,
for any $x\in G$ and $j\in\zz$, $(\sz(x))_j=x_{j-1}$. Moreover, for any $m\in\zz$,
let $U_m:=\rho^m(U)$, where
\begin{align}\label{2e1}
U:=\lf\{(x)_j\in G:\ x_j=0\ {\rm for}\ j\in (-\fz,0]\cap\zz\r\}.
\end{align}
Then the set $\mathcal{U}:=\{U_m:\ m\in\zz\}$ forms a local base at the identity element
$\theta:=(\ldots,0,0,\ldots)$ and $\mathcal{B}_{\mathcal{U}}:=\{x+U_m:\ x\in G,\ m\in\zz\}$
is the base for the topology on $G$. With respect to
the topology and the group operation given above, $G$ forms a topological group which
is called a Vilenkin group.

The dual group $G^*$ of $G$ is topologically isomorphic to $G$, which consists of all sequences
of the form
$$\oz=(\oz_j)_{j\in\zz}=\lf(\ldots,0,0,\oz_k,\oz_{k+1},\ldots\r),$$
where $\oz_j\in\{0,1,\ldots,p-1\}$ for any $j\in\zz$ and $\oz_j=0$ for every
$j\in(-\fz,k_\oz)\cap\zz$ with some integer $k_\oz$ depending on $\oz$.
The group operation and topology on $G^*$ are defined as above for $G$. And also,
the dilation $\rho$ and its inverse $\sz$ on $G^*$ have the same definitions as on the group $G$.
Furthermore, each character on $G$ can be defined by setting, for any $x\in G$,
$$\chi(x,\oz):=\exp\lf(\frac{2\pi i}{p}\sum_{j\in\zz}x_{-j}\oz_{j-1}\r)$$
with some $\oz\in G^*$.

For any $q\in[1,\fz)$, the Lebesgue space $L^q(G)$ (resp. $L^q(G^*)$) is defined via the
Haar measure $\mu$ (resp. $\mu^*$) on
Borel subsets of $G$ (resp. $G^*$) normalized by $\mu(U)=1$ (resp. $\mu^*(U^*)=1$),
where $U^*$ is the subgroup of $G^*$ defined similarly as in \eqref{2e1}. To be precise,
the Lebesgue space $L^q(G)$ is defined to be the set of all measurable functions $f$ on $G$
such that
$$\|f\|_{L^p(G)}:=\lf[\int_G|f(x)|^p\,d\mu(x)\r]^{1/p}<\fz,$$
and $L^q(G^*)$ is defined as $L^q(G)$ with $G$ replaced by $G^*$. Denote by $(\cdot,\cdot)$ and
$\|\cdot\|$, respectively, the inner product and the norm of $L^2(G)$ or $L^2(G^*)$.
For any function $f\in L^1(G)\cap L^2(G)$, the Fourier transform $\widehat{f}$ is defined by setting,
for any $\oz\in G^*$,
$$\widehat{f}(\oz):=\int_{G}f(x)\overline{\chi(x,\oz)}\,d\mu(x),$$
which belongs to the space $L^2(G)$. Moreover,
for any $f\in L^1(G^*)\cap L^2(G^*)$, the inverse Fourier transform $f^\vee$ is defined by setting,
for any $x\in G$,
$$f^\vee(x):=\int_{G^*}f(\oz)\chi(x,\oz)\,d\mu^*(\oz).$$
Observe that the Fourier operator and the inverse Fourier operator
can be extended in the standard way, respectively, to the whole space $L^2(G)$ and $L^2(G^*)$.

Let
$$\Lz:=\lf\{(x)_j\in G:\ x_j=0\ {\rm for}\ j\in (0,\fz)\cap\zz\r\}$$
and, for any $j\in\zz$, $\Lz_j:=\rho^j(\Lz)$. Then $\Lz$ is a countable and closed subgroup of $G$
and, for any $j\in\zz$, $\Lz_{j+1}\subset\Lz_j$. In addition, the subgroup $\Lz$ of $G^*$
is defined similarly, as for $G$. Note that the bijective map $\lz:\ \Lz\to\zz_+$, defined by
$$\lz(x):=\sum_{j\in\zz}x_jp^{-j},\quad\forall\,x=(x_j)\in\Lz,$$
identifies elements of $\Lz$ and $\zz_+$. Therefore, we will use the same notation for elements
of $\Lz$ and $\zz_+$ unless there is confusion.

We next recall the concept of the multiresolution analysis (see, for instance, \cite{Far08}).

\begin{definition}\label{2d1}
Let $G$ be a Vilenkin group.
An increasing sequence of closed subspaces in $L^2(G)$,
denoted by $\{V_j\}_{j\in\zz}$, is called a {\it multiresolution analysis} (for short, MRA)
\emph{of $L^2(G)$} if it has the properties as follows:

\begin{enumerate}
\item [{\rm(i)}]
$\bigcup_{j\in\zz} V_j$ is dense in $L^2(G)$ and
$\bigcap_{j\in\zz} V_j=\{\Theta\}$,
where $\Theta$ denotes the zero element of $L^2(G)$;

\item [{\rm(ii)}]
for each $j\in\zz$ and $f\in L^2(G)$, $f(\cdot)\in V_j$
if and only if $\rho f(\cdot):=f(\rho\cdot)\in V_{j+1}$;

\item [{\rm(iii)}]
for any $f\in L^2(G)$, $f(\cdot)\in V_0$ if and only if $f(\cdot\mz n)\in V_{0}$
for any $n\in\Lz$;

\item [{\rm(iv)}]
there exists an $L^2(G)$-function $\phi$ (called a {\it scaling function}) such that
$\{\phi(\cdot\mz n)\}_{n\in\Lz}$ forms a \emph{Riesz basis} of $V_0$, that is,
for any sequence $\{\bz_n\}_{n\in\Lz}\in \ell^2$,
$$C\lf(\sum_{n\in\Lz}|\bz_n|^2\r)^{1/2}
\le\lf\|\sum_{n\in\Lz} \bz_n \phi(\cdot\mz n)\r\|
\le \widetilde{C}\lf(\sum_{n\in\Lz}|\bz_n|^2\r)^{1/2},$$
where $C$ and $\widetilde{C}$ are two positive constants independent of $\{\bz_n\}_{n\in\Lz}$.
\end{enumerate}
\end{definition}
Let $\{V_j\}_{j\in\zz}$ be the same as in Definition \ref{2d1} and, for any $j\in\zz$,
$W_j$ the {\it orthogonal complement} of $V_{j}$ in $V_{j+1}$. Then we have
\begin{align*}
V_{j+1}=\bigoplus_{i\in(-\fz,j]\cap\zz} W_i
\ \text{and}\
L^2(G)=\bigoplus_{i\in\zz} W_i.
\end{align*}
Hereinafter, for any $f\in L^2(G)$, $j\in\zz$, $n\in\Lz$ and $x\in G$, we always let
$$f_{j,n}(x):=p^{j/2}f\lf(\rho^j(x)\mz n\r).$$

The following lemma comes from \cite{Far08}.

\begin{lemma}\label{2l1}
There exists a sequence $\{V_j\}_{j\in\zz}$, which are the same as in
Definition \ref{2d1}, and families of scaling functions $\{\phi_{j,n}\}_{j\in\zz,n\in\Lz}$
and wavelets $\{\psi_{j,n}^{(u)}\}_{j\in\zz,n\in\Lz,u\in\Pz}$ satisfying that,
for any $j\in\zz$, the set $\{\phi_{j,n}\}_{n\in\Lz}$ forms an orthonormal basis
of $V_j$ and the set $\{\psi_{j,n}^{(u)}\}_{n\in\Lz,u\in\Pz}$ an orthonormal basis of $W_j$.
Furthermore, the set $\{\psi_{j,n}^{(u)}\}_{j\in\zz,n\in\Lz,u\in\Pz}$ forms
an orthonormal basis of $L^2(G)$.
\end{lemma}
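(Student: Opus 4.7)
The plan is to adapt the classical Mallat--Meyer MRA construction to the Vilenkin setting, keeping in mind that the dilation $\rho$ scales the Haar measure by the factor $p$, so that the detail space $W_0$ must be generated by $p-1$ wavelet functions indexed by $u\in\Pz$. First, I would produce a concrete MRA via the Haar scaling function $\phi:=\mathbf{1}_U$, where $U$ is the compact open subgroup from \eqref{2e1}. A routine check shows that the cosets $\{n\pz U:\ n\in\Lz\}$ partition $G$ and $\mu(U)=1$, which already guarantees that $\{\phi(\cdot\mz n)\}_{n\in\Lz}$ is an orthonormal system in $L^2(G)$. Setting $V_0:=\overline{\mathrm{span}}\{\phi(\cdot\mz n):\ n\in\Lz\}$ and $V_j:=\{f(\rho^j\cdot):\ f\in V_0\}$ for $j\in\zz$, all conditions of Definition \ref{2d1} can be verified directly; only (i) requires a mild approximation argument using that $\bigcup_{j\in\zz}V_j$ contains the indicators of all cosets $x\pz U_j$, which generate the Borel $\sigma$-algebra of $G$ modulo $\mu$-null sets.

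Second, I would derive the refinement equation and the associated filter identities. Since $\phi\in V_0\subset V_1$ and $\{\phi_{1,n}\}_{n\in\Lz}$ is an orthonormal basis of $V_1$, there exists a $\Lz$-periodic function $m_0$ on $G^*$ (the low-pass filter) satisfying $\widehat{\phi}(\sz\oz)=m_0(\oz)\widehat{\phi}(\oz)$. A standard periodization argument then converts the orthonormality of $\{\phi(\cdot\mz n)\}_{n\in\Lz}$ into the quadrature identity
$$\sum_{v=0}^{p-1}\bigl|m_0(\oz\pz\eta_v)\bigr|^2=1\quad\text{for a.e.\ }\oz\in G^*,$$
where $\{\eta_v\}_{v=0}^{p-1}$ is a transversal of $\sz(\Lz)$ in $\Lz$.

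Third, the heart of the argument is the construction of $p-1$ high-pass filters $m_u$ ($u\in\Pz$) such that the $p\times p$ matrix $M(\oz):=(m_u(\oz\pz\eta_v))_{u,v=0}^{p-1}$ is unitary for a.e.\ $\oz$. This unitary completion is the main obstacle, but since $\Lz/\sz(\Lz)\cong\zz/p\zz$ is a finite cyclic group, the completion can be performed explicitly using the discrete Fourier matrix of $\zz/p\zz$. Defining $\widehat{\psi^{(u)}}(\sz\oz):=m_u(\oz)\widehat{\phi}(\oz)$ for $u\in\Pz$, the unitarity of $M$ together with another periodization computation yields that $\{\psi^{(u)}(\cdot\mz n)\}_{n\in\Lz,u\in\Pz}$ is an orthonormal basis of $W_0$, the orthogonal complement of $V_0$ in $V_1$. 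Finally, dilating by $\rho^j$ delivers the bases of each $V_j$ and $W_j$ claimed in the statement, and the orthogonal decomposition $L^2(G)=\bigoplus_{j\in\zz}W_j$, which follows from (i) of Definition \ref{2d1} and the nested structure of $\{V_j\}_{j\in\zz}$, assembles these into an orthonormal basis of the whole space.
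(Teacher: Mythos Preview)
The paper does not prove this lemma at all; it simply records it as a result ``from \cite{Far08}'' (Farkov's MRA paper) and uses it as a black box. Your sketch therefore cannot be compared to a proof in the paper, but it is a legitimate and essentially correct existence argument: since the statement only asserts \emph{existence}, exhibiting the Haar MRA with $\phi=\mathbf{1}_U$ is exactly the right move. The coset decomposition $G=\bigcup_{n\in\Lz}(n\pz U)$ gives orthonormality of $\{\phi(\cdot\mz n)\}_{n\in\Lz}$, the density and triviality in Definition~\ref{2d1}(i) follow from the fact that the cosets $x\pz U_m$ generate the Borel $\sigma$-algebra, and the unitary completion in step three is concrete in the Haar case because the row $(m_0(\oz\pz\eta_v))_{v=0}^{p-1}$ is constant, so the $p\times p$ DFT matrix of $\zz/p\zz$ does the job.

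One slip worth fixing: your refinement equation is written with the dilation on the wrong side. Since $\phi\in V_0\subset V_1$ and $\{\phi_{1,n}\}_{n\in\Lz}$ spans $V_1$, taking Fourier transforms yields $\widehat{\phi}(\oz)=m_0(\sz\oz)\widehat{\phi}(\sz\oz)$, i.e.\ $\widehat{\phi}(\rho\oz)=m_0(\oz)\widehat{\phi}(\oz)$, not $\widehat{\phi}(\sz\oz)=m_0(\oz)\widehat{\phi}(\oz)$ as you wrote (compare Lemma~\ref{2l3}(i) in the paper). The same correction applies to your definition of $\widehat{\psi^{(u)}}$. This does not affect the Haar example materially, but the formulas as written would not match the quadrature identity you state immediately afterwards.
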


First, we have the succeeding conclusion, which implies that, if $\phi$ is a scaling function
of an MRA, then $\sum_{n\in\Lz}|\widehat{\phi}(\oz\pz n)|^2=1$ for almost every $\oz\in G^*$.

\begin{lemma}\label{2l2}
Let $f\in L^2(G)$. Then $\{f(\cdot\pz n)\}_{n\in\Lz}$ forms an orthonormal set if and only if
\begin{align}\label{2e2}
\sum_{n\in\Lz}\lf|\widehat{f}(\oz\pz n)\r|^2=1\ for\ almost\ every\ \oz\in G^*.
\end{align}
\end{lemma}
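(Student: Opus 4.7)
The plan is to apply the classical Parseval-plus-periodization scheme, adapted to the Vilenkin setting. By translation invariance of the Haar measure $\mu$, the orthonormality of $\{f(\cdot\pz n)\}_{n\in\Lz}$ is equivalent to the family of identities $\int_G f(x)\overline{f(x\pz k)}\,d\mu(x)=\dz_{k,\theta}$ for every $k\in\Lz$. A direct change of variables combined with the multiplicativity of $\chi(\cdot,\oz)$ gives $\widehat{f(\cdot\pz k)}(\oz)=\chi(k,\oz)\widehat{f}(\oz)$, so Plancherel's identity turns each such integral into
\[
\int_{G^*}\lf|\widehat{f}(\oz)\r|^2\overline{\chi(k,\oz)}\,d\mu^*(\oz).
\]

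To leverage the sum over $\Lz$ in \eqref{2e2}, I would periodize. Since $U^*$ is a fundamental domain for $\Lz\subset G^*$ with $\mu^*(U^*)=1$, the decomposition $G^*=\bigsqcup_{n\in\Lz}(U^*\pz n)$, the substitution $\oz\mapsto\oz\pz n$ on each piece, and the annihilator relation $\chi(k,n)=1$ for $k,n\in\Lz$ (which renders $\overline{\chi(k,\cdot)}$ $\Lz$-periodic) convert the above integral into $\int_{U^*}F(\oz)\overline{\chi(k,\oz)}\,d\mu^*(\oz)$, where
\[
F(\oz):=\sum_{n\in\Lz}\lf|\widehat{f}(\oz\pz n)\r|^2.
\]
Tonelli's theorem justifies the interchange of sum and integral, since the summand is nonnegative and Plancherel ensures $F\in L^1(U^*)$. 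Because $\{\chi(k,\cdot)\}_{k\in\Lz}$ forms an orthonormal basis of $L^2(U^*)$, these integrals equal $\dz_{k,\theta}$ for every $k\in\Lz$ if and only if $F=1$ a.e.\ on $U^*$, and by $\Lz$-periodicity of $F$ this is the same as \eqref{2e2}.

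The main obstacle is bookkeeping rather than conceptual: the argument hinges on the standard structural facts that $U^*$ is a fundamental domain for $\Lz\subset G^*$ of unit Haar measure, that $\chi(\Lz,\Lz)=\{1\}$, and that $\{\chi(k,\cdot)\}_{k\in\Lz}$ is complete in $L^2(U^*)$. These are implicit in the framework of \cite{Far08,Vilen47} recalled in Section \ref{s2}, but each must be matched carefully to the present paper's conventions before the Fourier-analytic manipulation above goes through cleanly.
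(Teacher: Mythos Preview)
Your proposal is correct and follows essentially the same route as the paper: compute the inner products $\int_G f\,\overline{f(\cdot\pz k)}\,d\mu$ via Plancherel, periodize over the fundamental domain $U^*$ to obtain the Fourier coefficients of $F(\oz)=\sum_{n\in\Lz}|\widehat{f}(\oz\pz n)|^2$, and invoke uniqueness of Fourier series on $U^*$. The only cosmetic difference is that the paper writes $f(x\pz k)$ via the inverse Fourier transform and expands by hand, whereas you invoke the translation formula $\widehat{f(\cdot\pz k)}=\chi(k,\cdot)\widehat{f}$ and Plancherel directly; your explicit mention of Tonelli and of $F\in L^1(U^*)$ is a welcome bit of extra care.
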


\begin{proof}
We first show the necessity of the present lemma. To this end, let $\{f(\cdot\pz n)\}_{n\in\Lz}$
be an orthonormal set in $L^2(G)$. Note that, for any $x\in G$,
$$f(x)=\lf(\widehat{f}\,\r)^\vee(x)=\int_{G^*}\widehat{f}(\oz)\chi(x,\oz)\,d\mu^*(\oz).$$
Then, for any $k\in\Lz$, we have
\begin{align}\label{2e3}
\lf(f(\cdot),f(\cdot\pz k)\r)
&=\int_G\overline{f(x)}f(x\pz k)\,d\mu(x)\\
&=\int_G\overline{f(x)}\int_{G^*}\widehat{f}(\oz)\chi(x\pz k,\oz)\,d\mu^*(\oz)d\mu(x)\noz\\
&=\int_{G^*}\widehat{f}(\oz)\chi(k,\oz)\int_G\overline{f(x)}\chi(x,\oz)\,d\mu(x)d\mu^*(\oz)\noz\\
&=\int_{G^*}\widehat{f}(\oz)\chi(k,\oz)
\overline{\int_Gf(x)\overline{\chi(x,\oz)}\,d\mu(x)}d\mu^*(\oz)\noz\\
&=\int_{G^*}\lf|\widehat{f}(\oz)\r|^2\chi(k,\oz)d\mu^*(\oz)\noz\\
&=\int_{U^*}\sum_{n\in\Lz}\lf|\widehat{f}(\oz\pz n)\r|^2\chi(k,\oz)d\mu^*(\oz).\noz
\end{align}
Thus, the Fourier coefficients of the $\Lz$-periodic function
$\sum_{n\in\Lz}|\widehat{f}(\oz\pz n)|^2$, namely,
$$c_k=\int_{U^*}\sum_{n\in\Lz}\lf|\widehat{f}(\oz\pz n)\r|^2\chi(k,\oz)d\mu^*(\oz)$$
equals to $0$ for any $k\in\Lz\backslash\{\theta\}$ and $c_{\theta}=1$. This further implies
that \eqref{2e2} holds true.

The sufficiency of this lemma can be verified by an argument
similar as above; the details are omitted, which completes the proof of Lemma \ref{2l2}.
\end{proof}

Moreover, we give the following useful scaling and wavelet equations.

\begin{lemma}\label{2l3}
Let $\{V_j\}_{j\in\zz}$ be an {\rm MRA} as in Definition \ref{2d1}. Assume that $\phi$ is a scaling
function and $\{\psi^{(u)}\}_{u\in\Pz}$ a sequence of wavelets associated with $\{V_j\}_{j\in\zz}$.
Then there exist $\Lz$-periodic functions $m_0$ and $\{m_1^{(u)}\}_{u\in\Pz}$, which are, respectively,
called scaling and wavelet filters such that,
\begin{enumerate}
\item [{\rm(i)}] for any $\oz\in G^*$,
\begin{align*}
\widehat{\phi}(\oz)=m_0(\sz(\oz))\widehat{\phi}(\sz(\oz));
\end{align*}
\item [{\rm(ii)}] for any $u\in\Pz$ and $\oz\in G^*$,
\begin{align*}
\widehat{\psi^{(u)}}(\oz)=m_1^{(u)}(\sz(\oz))\widehat{\phi}(\sz(\oz)).
\end{align*}
\end{enumerate}
\end{lemma}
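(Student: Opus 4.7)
The plan is to derive both (i) and (ii) from a single ingredient: since $\phi\in V_0\subset V_1$ and each $\psi^{(u)}\in W_0\subset V_1$, and since by Lemma \ref{2l1} the family $\{\phi_{1,n}\}_{n\in\Lz}$ is an orthonormal basis of $V_1$, we can expand
\begin{align*}
\phi=\sum_{n\in\Lz}h_n\,\phi_{1,n},\qquad
\psi^{(u)}=\sum_{n\in\Lz}h_n^{(u)}\,\phi_{1,n}
\end{align*}
with $\{h_n\},\{h_n^{(u)}\}\in\ell^2(\Lz)$. Taking Fourier transforms term-by-term and packaging the coefficients into trigonometric series on $G^*$ will yield the required filter identities.

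Next I would carry out the key Fourier-transform computation for a single $\phi_{1,n}(x)=p^{1/2}\phi(\rho(x)\mz n)$. Using the change of variable $y=\rho(x)\mz n$ (so $x=\sz(y)\pz\sz(n)$) together with $d\mu(x)=p^{-1}d\mu(y)$ (because $\rho$ rescales the Haar measure by $p$), and the standard identity $\chi(\sz(x),\oz)=\chi(x,\sz(\oz))$ for the characters on $G$, I expect to obtain
\begin{align*}
\widehat{\phi_{1,n}}(\oz)=p^{-1/2}\,\overline{\chi(n,\sz(\oz))}\,\widehat{\phi}(\sz(\oz)).
\end{align*}
Summing over $n\in\Lz$ then gives
\begin{align*}
\widehat{\phi}(\oz)=\left[p^{-1/2}\sum_{n\in\Lz}h_n\,\overline{\chi(n,\sz(\oz))}\right]\widehat{\phi}(\sz(\oz))
=:m_0(\sz(\oz))\,\widehat{\phi}(\sz(\oz)),
\end{align*}
and an identical computation with $h_n^{(u)}$ in place of $h_n$ produces $m_1^{(u)}(\sz(\oz))$ and hence (ii).

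It remains to verify that the functions $m_0$ and $m_1^{(u)}$ are $\Lz$-periodic; this follows from the orthogonality relation $\chi(n,\oz\pz \lz)=\chi(n,\oz)$ for all $n,\lz\in\Lz$, which is a standard property of characters on $G^*$ restricted to the annihilator $\Lz$. The main obstacle I anticipate is purely bookkeeping: keeping track of the normalization of the Haar measure under $\rho$ and the identity $\chi(\sz(x),\oz)=\chi(x,\sz(\oz))$ (with the correct direction) so that the factor $p^{-1/2}$ and the argument $\sz(\oz)$ come out right; once these are established, the $\ell^2$-convergence of $\{h_n\}$ and $\{h_n^{(u)}\}$ immediately gives convergence of the trigonometric series defining $m_0$ and $m_1^{(u)}$ in $L^2(U^*)$, and the identities hold pointwise $\mu^*$-almost everywhere on $G^*$.
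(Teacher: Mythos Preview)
Your proposal is correct and follows essentially the same route as the paper: expand $\phi$ and each $\psi^{(u)}$ in the orthonormal basis $\{\phi_{1,n}\}_{n\in\Lz}$ of $V_1$, take Fourier transforms, and read off the $\Lz$-periodic filters as trigonometric series in the coefficients. The only cosmetic differences are that the paper writes the expansion with shifts $\phi(\rho(\cdot)\pz n)$ rather than $\phi_{1,n}$ and records the filter with $\overline{\chi(\sz(n),\oz)}$ instead of your equivalent $\overline{\chi(n,\sz(\oz))}$; the underlying argument is the same.
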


\begin{proof}
We first prove (i). For this purpose, by (ii) and (iv) of Definition \ref{2d1}, we find that,
for any $j\in\zz$ and $f\in L^2(G)$, $f\in V_0$ if and only if $\rho^jf\in V_j$, and
$\{\phi_{j,n}\}_{n\in\Lz}$ forms an orthonormal basis of $V_j$. This, combined with the fact
that $\phi\in V_0\subset V_1$, implies that, for any $x\in G$,
\begin{align*}
\phi(x)=\sum_{n\in\Lz}a_n\phi(\rho(x)\pz n),
\end{align*}
where, for any $n\in\Lz$,
\begin{align*}
a_n:=p\int_G\phi(x)\overline{\phi(\rho(x)\pz n)}\,d\mu(x).
\end{align*}
From this, we infer that, for any $\oz\in G^*$,
\begin{align}\label{2e6}
\widehat{\phi}(\oz)&=\int_G\phi(x)\overline{\chi(x,\oz)}\,d\mu(x)\\
&=\int_G\sum_{n\in\Lz}a_n\phi(\rho(x)\pz n)\overline{\chi(x,\oz)}\,d\mu(x)\noz\\
&=\int_G\sum_{n\in\Lz}a_n\phi(z)\overline{\chi(\sz(z\pz n),\oz)}\,d\mu(\sz(z))\noz\\
&=\int_G\sum_{n\in\Lz}\frac{a_n}p\phi(z)\overline{\chi(\sz(z),\oz)}\,\overline{\chi(\sz(n),\oz)}\,d\mu(z)\noz\\
&=\sum_{n\in\Lz}\frac{a_n}p\overline{\chi(\sz(n),\oz)}\int_G\phi(z)\overline{\chi(\sz(z),\oz)}\,d\mu(z)\noz\\
&=m_0(\sz(\oz))\widehat{\phi}(\sz(\oz)),\noz
\end{align}
where
\begin{align*}
m_0(\sz(\oz)):=\sum_{n\in\Lz}\frac{a_n}p\overline{\chi(\sz(n),\oz)}
\end{align*}
is a $\Lz$-periodic function. This finishes the proof of (i).

Next, we show (ii). To do this, since $\{\psi^{(u)}\}_{u\in\Pz}\subset W_0\subset V_1$,
it follows that, for any $u\in\Pz$ and $x\in G$,
\begin{align*}
\psi^{(u)}(x)=\sum_{n\in\Lz}b^{(u)}_n\phi(\rho(x)\pz n),
\end{align*}
where, for any $u\in\Pz$ and $n\in\Lz$,
\begin{align*}
b^{(u)}_n:=p\int_G\psi^{(u)}(x)\overline{\phi(\rho(x)\pz n)}\,d\mu(x).
\end{align*}
By this and a calculation similar to \eqref{2e6}, we easily obtain that,
for any $u\in\Pz$ and $\oz\in G^*$,
\begin{align*}
\widehat{\psi^{(u)}}(\oz)=m^{(u)}_1(\sz(\oz))\widehat{\phi}(\sz(\oz)),
\end{align*}
where
\begin{align*}
m^{(u)}_1(\sz(\oz)):=\sum_{n\in\Lz}\frac{b^{(u)}_n}p\overline{\chi(\sz(n),\oz)}
\end{align*}
is a $\Lz$-periodic function for each $u\in\Pz$, which completes the proof of (ii)
and hence of Lemma \ref{2l3}.
\end{proof}

Using Lemmas \ref{2l1} through \ref{2l3}, we obtain several identical relations for scaling and
wavelet filters as follows.

\begin{lemma}\label{2l4}
Let $m_0$ and $\{m^{(u)}_1\}_{u\in\Pz}$ be, respectively, the scaling filter and wavelet filters
as in Lemma \ref{2l3}. Then, for any $u,\tau\in\Pz$ with $u\neq\tau$ and almost every $\oz\in G^*$,
it holds true that
\begin{align}\label{2e7}
\lf|m_0(\oz)\r|^2+\sum_{x\in\Pz}\lf|m_0\lf(\oz\pz0.x\r)\r|^2=1,
\end{align}
\begin{align}\label{2e8}
\lf|m^{(u)}_1(\oz)\r|^2+\sum_{x\in\Pz}\lf|m^{(u)}_1\lf(\oz\pz0.x\r)\r|^2=1,
\end{align}
\begin{align}\label{2e9}
m_0(\oz)\overline{m^{(u)}_1(\oz)}+\sum_{x\in\Pz}m_0(\oz\pz0.x)\overline{m^{(u)}_1(\oz\pz0.x)}=0
\end{align}
and
\begin{align}\label{2e9'}
m^{(u)}_1(\oz)\overline{m^{(\tau)}_1(\oz)}+
\sum_{x\in\Pz}m^{(u)}_1(\oz\pz0.x)\overline{m^{(\tau)}_1(\oz\pz0.x)}=0,
\end{align}
where $0.x:=(\nu_j)_{j\in\zz}$ is the element of $G^*$ satisfying that $\nu_1=x\in\Pz$
and $\nu_j=0$ for any $j\in\zz\setminus\{1\}$.
\end{lemma}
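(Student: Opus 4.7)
The plan is to combine the orthogonality structure of the scaling/wavelet system (read off from Lemma \ref{2l1} via Lemma \ref{2l2}) with the two-scale relations from Lemma \ref{2l3}, and then split the resulting lattice sum over $\Lz$ into $p$ cosets coming from the action of $\sz$.

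First, I would prepare four Fourier-side identities. Since, by Lemma \ref{2l1}, the families $\{\phi_{0,n}\}_{n\in\Lz}$ and, for each $u\in\Pz$, $\{\psi^{(u)}_{0,n}\}_{n\in\Lz}$ are orthonormal, and since they are mutually orthogonal (both across $u\neq\tau$ and between $\phi$ and $\psi^{(u)}$), an application of Lemma \ref{2l2} together with a direct bilinear analog of the computation \eqref{2e3} yields, for a.e.\ $\oz\in G^*$,
\begin{align*}
\sum_{n\in\Lz}\lf|\widehat{\phi}(\oz\pz n)\r|^2 = 1, \qquad
\sum_{n\in\Lz}\lf|\widehat{\psi^{(u)}}(\oz\pz n)\r|^2 = 1,\\
\sum_{n\in\Lz}\widehat{\phi}(\oz\pz n)\overline{\widehat{\psi^{(u)}}(\oz\pz n)} = 0, \quad
\sum_{n\in\Lz}\widehat{\psi^{(u)}}(\oz\pz n)\overline{\widehat{\psi^{(\tau)}}(\oz\pz n)} = 0\ (u\neq\tau).
\end{align*}

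Next, I would plug the two-scale relations from Lemma \ref{2l3} into each of these. Using that $\sz$ is a group automorphism of $G^*$, the first identity becomes
\begin{align*}
\sum_{n\in\Lz}\lf|m_0(\sz(\oz)\pz\sz(n))\r|^2\lf|\widehat{\phi}(\sz(\oz)\pz\sz(n))\r|^2 = 1.
\end{align*}
The decisive step is the coset decomposition
\begin{align*}
\sz(\Lz) = \bigsqcup_{x=0}^{p-1} \lf(0.x \pz \Lz\r),
\end{align*}
which holds because $\Lz$ consists of sequences vanishing at positive coordinates, so $\sz(\Lz)$ allows an arbitrary first coordinate $\nu_1\in\{0,1,\ldots,p-1\}$ while still vanishing beyond. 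Combining this with the $\Lz$-periodicity of $m_0$, which lets me pull $|m_0(\sz(\oz)\pz 0.x)|^2$ out of the inner sum, and reapplying the $\widehat{\phi}$-identity to the inner sum over $n\in\Lz$, the full sum collapses to $\sum_{x=0}^{p-1}|m_0(\sz(\oz)\pz0.x)|^2 = 1$. Substituting $\oz\mapsto\rho(\oz)$ and splitting off the $x=0$ term delivers \eqref{2e7}.

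Identities \eqref{2e8}, \eqref{2e9} and \eqref{2e9'} follow by applying exactly the same three-step reduction to the three remaining Fourier-side identities above, with $m_0$ replaced by $m_1^{(u)}$, or by the bilinear pairs $m_0\overline{m_1^{(u)}}$ and $m_1^{(u)}\overline{m_1^{(\tau)}}$. The main obstacle is the coset decomposition $\sz(\Lz) = \bigsqcup_{x=0}^{p-1}(0.x\pz\Lz)$, which requires careful coordinate-level bookkeeping about which indices are forced to vanish; everything else is $\Lz$-periodicity, the group-homomorphism property of $\sz$, and re-use of the orthonormality identities for $\widehat{\phi}$ and $\widehat{\psi^{(u)}}$.
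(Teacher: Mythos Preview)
Your proposal is correct and follows essentially the same route as the paper: derive the four periodization identities from the orthonormality/orthogonality of $\{\phi_{0,n}\}$, $\{\psi^{(u)}_{0,n}\}$ via Lemma~\ref{2l2} and its bilinear analog, substitute the two-scale relations of Lemma~\ref{2l3}, split the resulting sum using the coset decomposition $\sz(\Lz)=\bigsqcup_{x=0}^{p-1}(0.x\pz\Lz)$, use $\Lz$-periodicity of the filters, and collapse with the $\widehat{\phi}$-identity. The only cosmetic difference is that the paper evaluates the periodization identities at $\rho(\oz)$ at the outset, whereas you substitute $\oz\mapsto\rho(\oz)$ at the end.
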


\begin{proof}
We first show \eqref{2e7}. To this end, by Lemma \ref{2l1},
we know that $\{\phi(\cdot\pz n)\}_{n\in\Lz}$ is an orthonormal set in $L^2(G)$. This, together with
Lemma \ref{2l2}, implies that
\begin{align*}
\sum_{n\in\Lz}\lf|\widehat{\phi}(\oz\pz n)\r|^2=1\ {\rm for\ almost\ every}\ \oz\in G^*.
\end{align*}
From this, Lemma \ref{2l3}, the definition of $\{\Lz_j\}_{j\in\zz}$ and the fact that
$m_0$ is a $\Lz$-periodic function, we deduce that, for almost every $\oz\in G^*$,
\begin{align}\label{2e10}
1&=\sum_{n\in\Lz}\lf|\widehat{\phi}(\rho(\oz)\pz n)\r|^2\\
&=\sum_{n\in\Lz}\lf|m_0(\sz(\rho(\oz)\pz n))\r|^2\lf|\widehat{\phi}(\sz(\rho(\oz)\pz n))\r|^2\noz\\
&=\sum_{n\in\Lz}\lf|m_0(\oz\pz\sz(n))\r|^2\lf|\widehat{\phi}(\oz\pz\sz(n))\r|^2\noz\\
&=\sum_{n\in\Lz_1}\lf|m_0(\oz\pz\sz(n))\r|^2\lf|\widehat{\phi}(\oz\pz\sz(n))\r|^2\noz\\
&\quad+\sum_{x\in\Pz}\sum_{n\in\Lz_1}
\lf|m_0(\oz\pz\sz(n)\pz0.x)\r|^2\lf|\widehat{\phi}(\oz\pz\sz(n)\pz0.x)\r|^2\noz\\
&=\lf|m_0(\oz)\r|^2\sum_{n\in\Lz_1}\lf|\widehat{\phi}(\oz\pz\sz(n))\r|^2
+\sum_{x\in\Pz}\lf|m_0(\oz\pz0.x)\r|^2\sum_{n\in\Lz_1}\lf|\widehat{\phi}(\oz\pz\sz(n)\pz0.x)\r|^2\noz\\
&=\lf|m_0(\oz)\r|^2\sum_{n\in\Lz}\lf|\widehat{\phi}(\oz\pz n)\r|^2
+\sum_{x\in\Pz}\lf|m_0(\oz\pz0.x)\r|^2\sum_{n\in\Lz}\lf|\widehat{\phi}(\oz\pz n\pz0.x)\r|^2\noz\\
&=\lf|m_0(\oz)\r|^2+\sum_{x\in\Pz}\lf|m_0(\oz\pz0.x)\r|^2.\noz
\end{align}
Thus, \eqref{2e7} holds true.

To verify \eqref{2e8}, observe that Lemmas \ref{2l1} and \ref{2l2} also imply that,
for any $u\in\Pz$,
\begin{align*}
\sum_{n\in\Lz}\lf|\widehat{\psi^{(u)}}(\oz\pz n)\r|^2=1\ {\rm for\ almost\ every}\ \oz\in G^*.
\end{align*}
By this and a proof similar to \eqref{2e10}, we conclude that \eqref{2e8} is valid; the details
are omitted.

Next, we prove \eqref{2e9}. Indeed, by the fact that $\psi^{(u)}\perp V_0$ with $u\in\Pz$
and an argument similar to \eqref{2e3}, we find that, for any $u\in\Pz$ and $k\in\Lz$,
\begin{align*}
0=\lf(\phi(\cdot\pz k),\psi^{(u)}(\cdot)\r)=\int_{U^*}\sum_{n\in\Lz}
\widehat{\phi}(\oz\pz n)\overline{\widehat{\psi^{(u)}}(\oz\pz n)}\chi(k,\oz)d\mu^*(\oz).
\end{align*}
Therefore,
$${\rm all\ the\ Fourier\ coefficients\ of\ the\ function}\
\sum_{n\in\Lz}\widehat{\phi}(\oz\pz n)\overline{\widehat{\psi^{(u)}}(\oz\pz n)}\
{\rm equal\ to}\ 0.$$
From this, Lemma \ref{2l3} and the $\Lz$-periodicity of $\{m_0,m_1^{(u)}\}_{u\in\Pz}$,
we infer that, for any $u\in\Pz$ and almost every $\oz\in G^*$,
\begin{align*}
0&=\sum_{n\in\Lz}\widehat{\phi}(\rho(\oz)\pz n)\overline{\widehat{\psi^{(u)}}(\rho(\oz)\pz n)}\\
&=\sum_{n\in\Lz}m_0(\sz(\rho(\oz)\pz n))\widehat{\phi}(\sz(\rho(\oz)\pz n))
\overline{m_1^{(u)}(\sz(\rho(\oz)\pz n))}\,\overline{\widehat{\phi}(\sz(\rho(\oz)\pz n))}\\
&=\sum_{n\in\Lz}m_0(\oz\pz\sz(n))\widehat{\phi}(\oz\pz\sz(n))
\overline{m_1^{(u)}(\oz\pz\sz(n))}\,\overline{\widehat{\phi}(\oz\pz\sz(n))}\\
&=\sum_{n\in\Lz_1}m_0(\oz\pz\sz(n))\overline{m_1^{(u)}(\oz\pz\sz(n))}
\lf|\widehat{\phi}(\oz\pz\sz(n))\r|^2\\
&\quad+\sum_{x\in\Pz}\sum_{n\in\Lz_1}m_0(\oz\pz\sz(n)\pz0.x)\overline{m_1^{(u)}(\oz\pz\sz(n)\pz0.x)}
\lf|\widehat{\phi}(\oz\pz\sz(n)\pz0.x)\r|^2\\
&=m_0(\oz)\overline{m_1^{(u)}(\oz)}\sum_{n\in\Lz}\lf|\widehat{\phi}(\oz\pz n)\r|^2
+\sum_{x\in\Pz}m_0(\oz\pz0.x)\overline{m_1^{(u)}(\oz\pz0.x)}
\sum_{n\in\Lz}\lf|\widehat{\phi}(\oz\pz n\pz0.x)\r|^2\\
&=m_0(\oz)\overline{m_1^{(u)}(\oz)}+\sum_{x\in\Pz}m_0(\oz\pz0.x)\overline{m_1^{(u)}(\oz\pz0.x)}.
\end{align*}
Thus, \eqref{2e9} is true.

Finally, by the mutual orthogonality of $\{\psi^{(u)}\}_{u\in\Pz}$, the validity of \eqref{2e9'}
can be easily checked via repeating the proof of \eqref{2e9} with some slight modifications;
the details are omitted. This finishes the proof of Lemma \ref{2l4}.
\end{proof}

\begin{remark}\label{2r}
When $p=2$, the Vilenkin group $G$ used in present article goes back to the Cantor dyadic group
(see, for instance, \cite{Lang96,ms21}). In this case,
Lemmas \ref{2l2} through \ref{2l4} above are, respectively, \cite[Theorems 1 through 3]{ms21}.
\end{remark}

\section{A characterization of wavelet sets on Vilenkin groups}\label{s3}

In this section, we give a necessary and sufficient condition for a set of the dual group of
a Vilenkin group to become a wavelet set.

We begin with the following notion of multiwavelet sets from \cite[Definition 2.2.]{mss23}.

\begin{definition}\label{3d1}
Let $G$ be a Vilenkin group. A set of measurable subsets of $G^*$, denoted by
$$\Oz:=\lf\{\Oz^{(u)}\subset G^*:\ u\in\Pz\r\},$$
is called a \emph{multiwavelet set} if the set
$$\lf\{\psi_{j,n}^{(u)}\r\}_{j\in\zz,n\in\Lz,u\in\Pz}
:=\lf\{\lf(\lf(\mathbf{1}_{\Oz^{(u)}}\r)^\vee\r)_{j,n}\r\}_{j\in\zz,n\in\Lz,u\in\Pz}$$
forms an orthonormal basis of $L^2(G)$.
\end{definition}

Throughout this article, we also call a multiwavelet set simply by a wavelet set in the absence
of confusions.

The succeeding theorem is the first main result of this article, which also can be found in
\cite[Theorem 2.4]{mss23}. However, the authors of \cite{mss23} did not give its proof. For the
sake of completeness and to support our claim in Theorem \ref{t2} below, detailed proofs are given
here via borrowing some ideas from the proof of \cite[Theorem 4]{ms21}.

\begin{theorem}\label{t1}
Let $G$ be a Vilenkin group and  $\Oz=\{\Oz^{(u)}\}_{u\in\Pz}$ a set of measurable subsets
of $G^*$. Then $\Oz$ is a wavelet set if and only if both of the following conditions hold true:
\begin{enumerate}
\item [{\rm (i)}]
$\{\sz^k(\Oz^{(u)})\}_{k\in\zz,u\in\Pz}$ tiles $G^*$ up to sets of measure zero, that is,
for any $u_1,u_2\in\Pz$ and $j,k\in\zz$ with $u_1\neq u_2$ or $j\neq k$,
$$\mu^*\lf(\sz^j\lf(\Oz^{(u_1)}\r)\cap\sz^k\lf(\Oz^{(u_2)}\r)\r)=0$$
and
$$\mu^*\lf(G^*\setminus\bigcup_{k\in\zz,u\in\Pz}\sz^k\lf(\Oz^{(u)}\r)\r)=0.$$
\item [{\rm (ii)}]
For any $u\in\Pz$, $\Oz^{(u)}$ is $\Lz$-translation congruent to $U^*$ up to sets of measure zero,
that is, there exists a partition $\{\Oz_n^{(u)}:\ n\in N^{(u)}\subset\zz_+\}$ of $\Oz^{(u)}$ such that,
for any $n_1,n_2\in N^{(u)}$ with $n_1\neq n_2$,
\begin{align}\label{3e4}
\mu^*\lf(\lf(\Oz_{n_1}^{(u)}\mz n_1\r)\cap\lf(\Oz_{n_2}^{(u)}\mz n_2\r)\r)=0,
\end{align}
\begin{align}\label{3e5}
\bigcup_{n\in N^{(u)}}\lf(\Oz_n^{(u)}\mz n\r)\subset U^*\quad  and\quad
\mu^*\lf(U^*\setminus\bigcup_{n\in N^{(u)}}\lf(\Oz_n^{(u)}\mz n\r)\r)=0.
\end{align}
\end{enumerate}
\end{theorem}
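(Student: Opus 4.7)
The plan is to use Plancherel to translate every part of the orthonormal-basis condition for $\lf\{\psi_{j,n}^{(u)}\r\}_{j\in\zz,n\in\Lz,u\in\Pz}$ into a geometric condition on the dilated sets $\sz^k(\Oz^{(u)})$ in $G^*$. A direct computation using the change of variable $y=\rho^j(x)\mz n$ (which carries a Jacobian $p^{-j}$, exactly as in the derivation of \eqref{2e6}), together with $\psi^{(u)}=(\mathbf{1}_{\Oz^{(u)}})^\vee$, yields
\begin{align*}
\widehat{\psi_{j,n}^{(u)}}(\oz)=p^{-j/2}\,\overline{\chi(\sz^j(n),\oz)}\,\mathbf{1}_{\sz^{-j}(\Oz^{(u)})}(\oz).
\end{align*}
Thus $\widehat{\psi_{j,n}^{(u)}}$ is supported on $\sz^{-j}(\Oz^{(u)})$, and, since $k=-j$ ranges over $\zz$ exactly as $j$ does, every inner product $(\psi_{j,n}^{(u)},\psi_{k,m}^{(\tau)})$ becomes, by Plancherel, an integral of a character over $\sz^{-j}(\Oz^{(u)})\cap\sz^{-k}(\Oz^{(\tau)})$.

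Next I would decompose the ONB requirement into three pieces. \emph{Same-scale, same-$u$ orthonormality}: the change of variable $y=\rho^j(x)$ shows $(\psi_{j,n}^{(u)},\psi_{j,m}^{(u)})=(\psi^{(u)}(\cdot\mz n),\psi^{(u)}(\cdot\mz m))$, so by Lemma \ref{2l2} this is equivalent to $\sum_{n\in\Lz}\mathbf{1}_{\Oz^{(u)}}(\oz\pz n)=1$ for a.e.\ $\oz\in G^*$, which is precisely the $\Lz$-translation congruence of $\Oz^{(u)}$ with $U^*$ stated in (ii). \emph{Cross-scale or cross-$u$ orthogonality}: specializing $n=m=\theta$ in $(\psi_{j,\theta}^{(u)},\psi_{k,\theta}^{(\tau)})=0$ gives immediately $p^{-(j+k)/2}\,\mu^*(\sz^{-j}(\Oz^{(u)})\cap\sz^{-k}(\Oz^{(\tau)}))=0$ whenever $(j,u)\ne(k,\tau)$, which is the disjointness half of (i). \emph{Completeness}: using (ii) and Plancherel applied on the dilated lattice $\sz^j(\Lz)$ over the fundamental domain $\sz^{-j}(U^*)$, one obtains
\begin{align*}
\sum_{n\in\Lz}|(f,\psi_{j,n}^{(u)})|^2=\int_{\sz^{-j}(\Oz^{(u)})}|\widehat f(\oz)|^2\,d\mu^*(\oz)
\end{align*}
for every $j,u$, so that the ONB Parseval identity $\|f\|^2=\sum_{j,n,u}|(f,\psi_{j,n}^{(u)})|^2$, applied to arbitrary $f\in L^2(G)$, forces $\mu^*(G^*\setminus\bigcup_{k\in\zz,u\in\Pz}\sz^k(\Oz^{(u)}))=0$, delivering the covering half of (i).

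For the converse, each of the three computations runs in reverse: (ii) produces same-scale orthonormality via Lemma \ref{2l2}, the disjointness in (i) yields all cross-scale and cross-$u$ orthogonalities directly from the explicit integral representation of the inner products, and the covering in (i) combined with the scale-wise Plancherel identities furnished by (ii) rebuilds $\|\widehat f\|^2$ and hence delivers completeness of the system in $L^2(G)$. The main technical obstacle is the completeness step in either direction: one must justify cleanly that the scale-$j$ partial Bessel sum equals $\int_{\sz^{-j}(\Oz^{(u)})}|\widehat f(\oz)|^2\,d\mu^*(\oz)$, which requires applying Plancherel on the dilated lattice $\sz^j(\Lz)$ after first transporting $\sz^{-j}(\Oz^{(u)})$ piece by piece into the fundamental domain $\sz^{-j}(U^*)$ via the translation-congruence from (ii), and then patching these scale-wise identities over $j\in\zz$ against the covering in (i) without double-counting; this is precisely the place where both halves of (i) and all of (ii) must be simultaneously in force.
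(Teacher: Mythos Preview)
Your proposal is correct and follows essentially the same route as the paper's proof: both compute $\widehat{\psi_{j,n}^{(u)}}$ explicitly, reduce all inner products via Plancherel to integrals over intersections $\sz^{-j}(\Oz^{(u)})\cap\sz^{-k}(\Oz^{(\tau)})$, and tie completeness to the tiling through a scale-wise Parseval identity. The only cosmetic differences are that for (ii) in the necessity direction the paper computes the Fourier coefficients of $F^{(u)}(\oz):=\sum_{n}\mathbf{1}_{\Oz_n^{(u)}\mz n}(\oz)$ directly rather than invoking Lemma~\ref{2l2} as you do, and for the covering half of (i) the paper localizes to $E_\az:=E\cap(\az\pz U^*)$ and applies Parseval to $(\mathbf{1}_{E_\az})^\vee$ instead of first establishing your partial Bessel identity.
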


\begin{proof}
We prove the present theorem by two steps.

\emph{Step 1}. In this step, we show the sufficiency. For this purpose, let
$\{\Oz^{(u)}\}_{u\in\Pz}$ be a set of measurable subsets of $G^*$ satisfying both conditions (i) and (ii).
Then, by (ii), we find that, for any $u\in\Pz$, there exists a partition $\{\Oz_n^{(u)}:\ n\in N^{(u)}\subset\zz_+\}$ of $\Oz^{(u)}$ such that both \eqref{3e4} and \eqref{3e5} hold true.
Therefore,
\begin{align}\label{3e1}
\mu^*\lf(\Oz^{(u)}\r)=\sum_{n\in N^{(u)}}\mu^*\lf(\Oz_n^{(u)}\r)
=\sum_{n\in N^{(u)}}\mu^*\lf(\Oz_n^{(u)}\mz n\r)=\mu^*\lf(U^*\r)=1,
\end{align}
which further implies that $\mathbf{1}_{\Oz^{(u)}}\in L^2(G^*)$.
From this and the Parseval equality (see, for instance, \cite[Proposition 1(c)]{Far07}),
it follows that $\|\psi^{(u)}\|=1$ for any $u\in\Pz$.

Next, we prove that the set $\{\psi_{j,n}^{(u)}\}_{j\in\zz,n\in\Lz,u\in\Pz}$ given as in
Definition \ref{3d1} forms an orthonormal basis of $L^2(G)$ by two substeps.

\emph{Substep 1.1}. In this substep, we give the proof of the mutual orthogonality of
$\{\psi_{j,n}^{(u)}\}_{j\in\zz,n\in\Lz,u\in\Pz}$. To this end, by the definition of Fourier
transforms, we obtain that, for any $u\in\Pz$, $j\in\zz$, $n\in\Lz$ and $\oz\in G^*$,
\begin{align}\label{3e2'}
\lf(\psi_{j,n}^{(u)}\r)^\wedge(\oz)
&=\int_{G}\psi_{j,n}^{(u)}(x)\overline{\chi(x,\oz)}\,d\mu(x)\\
&=\int_{G}p^{j/2}\psi^{(u)}\lf(\rho^j(x)\mz n\r)\overline{\chi(x,\oz)}\,d\mu(x)\noz\\
&=p^{-j/2}\int_{G}\psi^{(u)}(y)\overline{\chi\lf(\sz^j(y\pz n),\oz\r)}\,d\mu(y)\noz\\
&=p^{-j/2}\overline{\chi\lf(\sz^j(n),\oz\r)}\lf(\psi^{(u)}\r)^\wedge\lf(\sz^j(\oz)\r).\noz
\end{align}
By this, the Parseval equality again and \eqref{3e1}, we conclude that, for any
$u\in\Pz$, $j\in\zz$ and $n\in\Lz$,
\begin{align}\label{3e2}
\lf\|\psi_{j,n}^{(u)}\r\|^2&=\lf\|\lf(\psi_{j,n}^{(u)}\r)^\wedge\r\|^2
=\int_{G^*}\lf|\lf(\psi_{j,n}^{(u)}\r)^\wedge(\oz)\r|^2\,d\mu^*(\oz)\\
&=p^{-j}\int_{G^*}\lf|\chi\lf(\sz^j(n),\oz\r)\r|^2
\lf|\lf(\psi^{(u)}\r)^\wedge\lf(\sz^j(\oz)\r)\r|^2\,d\mu^*(\oz)\noz\\
&=\int_{G^*}\mathbf{1}_{\Oz^{(u)}}(\oz)\,d\mu^*(\oz)=\mu^*\lf(\Oz^{(u)}\r)=1.\noz
\end{align}

On the other hand, for any $u_1,u_2\in\Pz$, $j,k\in\zz$ and $r,m\in\Lz$, using \eqref{3e2'},
we have
\begin{align}\label{3e3}
&\lf(\psi_{j,r}^{(u_1)},\psi_{k,m}^{(u_2)}\r)\\
&\quad=\lf(\lf(\psi_{j,r}^{(u_1)}\r)^\wedge,\lf(\psi_{k,m}^{(u_2)}\r)^\wedge\r)\noz\\
&\quad=\int_{G^*}\lf(\psi_{j,r}^{(u_1)}\r)^\wedge(\oz)
\overline{\lf(\psi_{k,m}^{(u_2)}\r)^\wedge(\oz)}\,d\mu^*(\oz)\noz\\
&\quad=\int_{G^*}p^{-(j+k)/2}\overline{\chi\lf(\sz^j(r),\oz\r)}\lf(\psi^{(u_1)}\r)^\wedge
\lf(\sz^j(\oz)\r)\chi\lf(\sz^k(m),\oz\r)\overline{\lf(\psi^{(u_2)}\r)^\wedge\lf(\sz^k(\oz)\r)}
\,d\mu^*(\oz)\noz\\
&\quad=\int_{\rho^j\Oz^{(u_1)}\cap\rho^k\Oz^{(u_2)}}p^{-(j+k)/2}
\overline{\chi\lf(\sz^j(r),\oz\r)}\chi\lf(\sz^k(m),\oz\r)\,d\mu^*(\oz).\noz
\end{align}
From this and the condition (i), we deduce that,
for any $u_1,u_2\in\Pz$ and $j,k\in\zz$ with $u_1\neq u_2$ or $j\neq k$,
\begin{align}\label{3e3'}
\lf(\psi_{j,r}^{(u_1)},\psi_{k,m}^{(u_2)}\r)=0.
\end{align}
If $u_1=u_2=u\in\Pz$ and $j=k\in\zz$, then \eqref{3e3} takes the form
\begin{align}\label{3e6}
\lf(\psi_{j,r}^{(u)},\psi_{j,m}^{(u)}\r)
&=\int_{\rho^j\Oz^{(u)}}p^{-j}\overline{\chi\lf(\sz^j(r),\oz\r)}\chi\lf(\sz^j(m),\oz\r)\,d\mu^*(\oz)\\
&=\int_{\Oz^{(u)}}\overline{\chi\lf(r,\oz\r)}\chi\lf(m,\oz\r)\,d\mu^*(\oz).\noz
\end{align}
Observe that, for any $n\in\Lz$ and $\oz\in G^*$, $\oz\mz pn=\oz$. Then, for any
$u\in\Pz$ and $n\in N^{(u)}$,
\begin{align*}
\int_{\Oz_n^{(u)}}\overline{\chi\lf(r,\oz\r)}\chi\lf(m,\oz\r)\,d\mu^*(\oz)
&=\frac1{p^2}\int_{\Oz_n^{(u)}}\overline{\chi\lf(r,p\oz\r)}\chi\lf(m,p\oz\r)\,d\mu^*(\oz)\\
&=\frac1{p^2}\int_{\Oz_n^{(u)}}\overline{\chi\lf(r,p\oz\mz pn\r)}\chi\lf(m,p\oz\mz pn\r)\,d\mu^*(\oz)\\
&=\int_{\Oz_n^{(u)}}\overline{\chi\lf(r,\oz\mz n\r)}\chi\lf(m,\oz\mz n\r)\,d\mu^*(\oz)\\
&=\int_{\Oz_n^{(u)}\mz n}\overline{\chi\lf(r,\oz\r)}\chi\lf(m,\oz\r)\,d\mu^*(\oz).
\end{align*}
This, together with \eqref{3e6}, the fact that $\{\Oz_n^{(u)}:\ n\in N^{(u)}\}$ is a
partition of $\Oz^{(u)}$, \eqref{3e4}, \eqref{3e5} and the mutual orthogonality of
$\{\chi(r,\cdot)\}_{r\in\Lz}$ on $U^*$ (see, for instance, \cite[p.\,311]{Far08}), implies that
\begin{align*}
\lf(\psi_{j,r}^{(u)},\psi_{j,m}^{(u)}\r)
&=\sum_{n\in N^{(u)}}\int_{\Oz_n^{(u)}}\overline{\chi\lf(r,\oz\r)}\chi\lf(m,\oz\r)\,d\mu^*(\oz)\\
&=\sum_{n\in N^{(u)}}\int_{\Oz_n^{(u)}\mz n}\overline{\chi\lf(r,\oz\r)}\chi\lf(m,\oz\r)\,d\mu^*(\oz)\\
&=\int_{U^*}\overline{\chi\lf(r,\oz\r)}\chi\lf(m,\oz\r)\,d\mu^*(\oz)\\
&=\begin{cases}
1
&\text{if}\ r=m,\\
0
&\text{if}\ r\neq m.
\end{cases}
\end{align*}
By this, \eqref{3e2}
and \eqref{3e3'}, it is easy to see that $\{\psi_{j,n}^{(u)}\}_{j\in\zz,n\in\Lz,u\in\Pz}$ is an
orthonormal set and hence the proof of Substep 1.1 is completed.

\emph{Substep 1.2}. In this substep, we show that the system
$\{\psi_{j,n}^{(u)}\}_{j\in\zz,n\in\Lz,u\in\Pz}$ is complete in $L^2(G)$, that is, for any $f\in L^2(G)$,
\begin{align*}
\sum_{j\in\zz,n\in\Lz,u\in\Pz}\lf|\lf(f,\psi_{j,n}^{(u)}\r)\r|^2=\|f\|^2.
\end{align*}
To do this, using the Parseval equality (see, for instance, \cite[Proposition 1(c)]{Far07})
and \eqref{3e2'}, we know that, for any $f\in L^2(G)$,
\begin{align*}
\sum_{j\in\zz,n\in\Lz,u\in\Pz}\lf|\lf(f,\psi_{j,n}^{(u)}\r)\r|^2
&=\sum_{j\in\zz,n\in\Lz,u\in\Pz}\lf|\lf(\widehat{f},\lf(\psi_{j,n}^{(u)}\r)^\wedge\r)\r|^2\\
&=\sum_{j\in\zz,n\in\Lz,u\in\Pz}\lf|\int_{G^*}\widehat{f}(\oz)
p^{-j/2}\chi\lf(\sz^j(n),\oz\r)\overline{\lf(\psi^{(u)}\r)^\wedge\lf(\sz^j(\oz)\r)}\,d\mu^*(\oz)\r|^2\\
&=\sum_{j\in\zz,n\in\Lz,u\in\Pz}\lf|\int_{G^*}\widehat{f}\lf(\rho^j(\oz)\r)
p^{j/2}\chi\lf(n,\oz\r)\overline{\lf(\psi^{(u)}\r)^\wedge\lf(\oz\r)}\,d\mu^*(\oz)\r|^2\\
&=\sum_{j\in\zz,n\in\Lz,u\in\Pz}\lf|\int_{\Oz^{(u)}}\widehat{f}\lf(\rho^j(\oz)\r)
p^{j/2}\chi\lf(n,\oz\r)\,d\mu^*(\oz)\r|^2.
\end{align*}
From this, the fact that $\{\Oz_n^{(u)}:\ n\in N^{(u)}\}$ is a
partition of $\Oz^{(u)}$, \eqref{3e4} and \eqref{3e5}, we infer that, for any $f\in L^2(G)$,
\begin{align}\label{3e7}
&\sum_{j\in\zz,n\in\Lz,u\in\Pz}\lf|\lf(f,\psi_{j,n}^{(u)}\r)\r|^2\\
&\quad=\sum_{j\in\zz,n\in\Lz,u\in\Pz}\lf|\sum_{i\in N^{(u)}}\int_{\Oz_i^{(u)}}
\widehat{f}\lf(\rho^j(\oz)\r)p^{j/2}\chi\lf(n,\oz\r)\,d\mu^*(\oz)\r|^2\noz\\
&\quad=\sum_{j\in\zz,n\in\Lz,u\in\Pz}p^{j}\lf|\sum_{i\in N^{(u)}}\int_{U^*}
\mathbf{1}_{\Oz_i^{(u)}\mz i}(\oz)\widehat{f}\lf(\rho^j(\oz\pz i\r)
\chi\lf(n,\oz\pz i\r)\,d\mu^*(\oz)\r|^2\noz\\
&\quad=\sum_{j\in\zz,n\in\Lz,u\in\Pz}p^{j}\lf|\sum_{i\in N^{(u)}}\frac1p\int_{U^*}
\mathbf{1}_{\Oz_i^{(u)}\mz i}(\oz)\widehat{f}\lf(\rho^j(\oz\pz i\r)
\chi\lf(n,p\oz\pz pi\r)\,d\mu^*(\oz)\r|^2\noz\\
&\quad=\sum_{j\in\zz,n\in\Lz,u\in\Pz}p^{j}\lf|\sum_{i\in N^{(u)}}\int_{U^*}
\mathbf{1}_{\Oz_i^{(u)}\mz i}(\oz)\widehat{f}\lf(\rho^j(\oz\pz i\r)
\chi\lf(n,\oz\r)\,d\mu^*(\oz)\r|^2\noz\\
&\quad=\sum_{j\in\zz,u\in\Pz}p^{j}\sum_{n\in\Lz}\lf|\int_{U^*}\sum_{i\in N^{(u)}}
\mathbf{1}_{\Oz_i^{(u)}\mz i}(\oz)\widehat{f}\lf(\rho^j(\oz\pz i\r)
\chi\lf(n,\oz\r)\,d\mu^*(\oz)\r|^2.\noz
\end{align}

Note that, for any $u\in\Pz$ and $j\in\zz$,
$$\supp\lf[\sum_{i\in N^{(u)}}
\mathbf{1}_{\Oz_i^{(u)}\mz i}(\oz)\widehat{f}\lf(\rho^j(\oz\pz i\r)\r]\subset U^*.$$
Then, for any $\oz\notin U^*$, we have $\oz\notin (\Oz_i^{(u)}\mz i)$ and hence
$\mathbf{1}_{\Oz_i^{(u)}\mz i}(\oz)=0$ for every $i\in N^{(u)}$.
Therefore, for any $u\in\Pz$, $j\in\zz$ and $\oz\notin U^*$,
$$\sum_{i\in N^{(u)}}\mathbf{1}_{\Oz_i^{(u)}\mz i}(\oz)\widehat{f}\lf(\rho^j(\oz\pz i\r)=0,$$
which, combined with the Plancherel theorem for Fourier series, further implies that
\begin{align*}
&\sum_{n\in\Lz}\lf|\int_{U^*}\sum_{i\in N^{(u)}}
\mathbf{1}_{\Oz_i^{(u)}\mz i}(\oz)\widehat{f}\lf(\rho^j(\oz\pz i\r)
\chi\lf(n,\oz\r)\,d\mu^*(\oz)\r|^2\\
&\quad=\int_{G^*}\lf|\sum_{i\in N^{(u)}}
\mathbf{1}_{\Oz_i^{(u)}\mz i}(\oz)\widehat{f}\lf(\rho^j(\oz\pz i\r)\r|^2\,d\mu^*(\oz)\\
&\quad=\int_{U^*}\lf|\sum_{i\in N^{(u)}}
\mathbf{1}_{\Oz_i^{(u)}\mz i}(\oz)\widehat{f}\lf(\rho^j(\oz\pz i\r)\r|^2\,d\mu^*(\oz).
\end{align*}
From this and \eqref{3e7} as well as conditions (i) and (ii), it follows that, for any $f\in L^2(G)$,
\begin{align*}
\sum_{j\in\zz,n\in\Lz,u\in\Pz}\lf|\lf(f,\psi_{j,n}^{(u)}\r)\r|^2
&=\sum_{j\in\zz,u\in\Pz}p^{j}\int_{U^*}\lf|\sum_{i\in N^{(u)}}
\mathbf{1}_{\Oz_i^{(u)}\mz i}(\oz)\widehat{f}\lf(\rho^j(\oz\pz i\r)\r|^2\,d\mu^*(\oz)\\
&=\sum_{j\in\zz,u\in\Pz}p^{j}\int_{U^*}\sum_{i\in N^{(u)}}
\lf|\widehat{f}\lf(\rho^j(\oz\pz i\r)\r|^2\mathbf{1}_{\Oz_i^{(u)}\mz i}(\oz)\,d\mu^*(\oz)\\
&=\sum_{j\in\zz,u\in\Pz}p^{j}\sum_{i\in N^{(u)}}\int_{\Oz_i^{(u)}}
\lf|\widehat{f}\lf(\rho^j(\oz\r)\r|^2\,d\mu^*(\oz)\\
&=\sum_{j\in\zz,u\in\Pz}\int_{\rho^j(\Oz^{(u)})}\lf|\widehat{f}(\oz)\r|^2\,d\mu^*(\oz)\\
&=\int_{G^*}\lf|\widehat{f}(\oz)\r|^2\,d\mu^*(\oz)
=\lf\|\widehat{f}\,\r\|^2=\|f\|^2.
\end{align*}
This finishes the proof of Substep 1.2 and hence of Step 1, that is, the sufficiency of Theorem \ref{t1}
has been proved.

\emph{Step 2}. In this step, we show the necessity. For this purpose,
assume that $\Oz=\{\Oz^{(u)}\}_{u\in\Pz}$ is a wavelet set, namely, the set
\begin{align}\label{3e8}
\lf\{\psi_{j,n}^{(u)}\r\}_{j\in\zz,n\in\Lz,u\in\Pz}
:=\lf\{\lf(\lf(\mathbf{1}_{\Oz^{(u)}}\r)^\vee\r)_{j,n}\r\}_{j\in\zz,n\in\Lz,u\in\Pz}
\end{align}
forms an orthonormal basis of $L^2(G)$. Then, using the Parseval equality
(see, for instance, \cite[Proposition 1(c)]{Far07}), we find that, for any $u\in\Pz$,
\begin{align}\label{3e8'}
\mu^*\lf(\Oz^{(u)}\r)=\lf\|\lf(\mathbf{1}_{\Oz^{(u)}}\r)^\vee\r\|
=\lf\|\mathbf{1}_{\Oz^{(u)}}\r\|=\lf\|\psi^{(u)}\r\|=1.
\end{align}

To verify (i), by the definition \eqref{3e8}, we obtain that, for any $u_1,u_2\in\Pz$ and $j,k\in\zz$,
\begin{align*}
\mu^*\lf(\rho^j\lf(\Oz^{(u_1)}\r)\cap\rho^k\lf(\Oz^{(u_2)}\r)\r)
=\int_{G^*}\mathbf{1}_{\rho^j(\Oz^{(u_1)})}(\oz)
\overline{\mathbf{1}_{\rho^k(\Oz^{(u_2)})}(\oz)}\,d\mu^*(\oz)\\
=\int_{G^*}\mathbf{1}_{\Oz^{(u_1)}}\lf(\sz^j(\oz)\r)
\overline{\mathbf{1}_{\Oz^{(u_2)}}\lf(\sz^k(\oz)\r)}\,d\mu^*(\oz)\\
=\int_{G^*}\widehat{\psi^{(u_1)}}\lf(\sz^j(\oz)\r)
\overline{\widehat{\psi^{(u_2)}}\lf(\sz^k(\oz)\r)}\,d\mu^*(\oz).
\end{align*}
Observe that, for any $u\in\Pz$, $j\in\zz$ and $\oz\in G^*$,
\begin{align}\label{3e9'}
\widehat{\psi^{(u)}}\lf(\sz^j(\oz)\r)
&=\int_{G}\psi^{(u)}(x)\overline{\chi\lf(x,\sz^j(\oz)\r)}\,d\mu(x)\\
&=\int_{G}\psi^{(u)}(x)\overline{\chi\lf(\sz^j(x),\oz\r)}\,d\mu(x)\noz\\
&=p^{j}\int_{G}\psi^{(u)}\lf(\rho^j(x)\r)\overline{\chi(x,\oz)}\,d\mu(x)\noz\\
&=p^{j/2}\widehat{\psi^{(u)}_{j,0}}(\oz).\noz
\end{align}
Then, by this, the Parseval equality and the mutual orthogonality of
$\{\psi_{j,0}^{(u)}\}_{j\in\zz,u\in\Pz}$, we conclude that, for any $u_1,u_2\in\Pz$ and $j,k\in\zz$
with $u_1\neq u_2$ or $j\neq k$,
\begin{align*}
\mu^*\lf(\rho^j\lf(\Oz^{(u_1)}\r)\cap\rho^k\lf(\Oz^{(u_2)}\r)\r)
&=p^{(j+k)/2}\int_{G^*}\widehat{\psi^{(u_1)}_{j,0}}(\oz)
\overline{\widehat{\psi^{(u_2)}_{j,0}}(\oz)}\,d\mu^*(\oz)\\
&=p^{(j+k)/2}\int_{G}\psi^{(u_1)}_{j,0}(x)\overline{\psi^{(u_2)}_{j,0}(x)}\,d\mu(x)
=0.
\end{align*}

On another hand, for any
$$\az\in G^*\setminus\bigcup_{k\in\zz,u\in\Pz}\sz^k\lf(\Oz^{(u)}\r)=:E,$$
let $E_\az:=E\cap(\az\pz U^*)$. Then $\mathbf{1}_{E_\az}\in L^2(G^*)$. From this,
the Parseval equality and the fact that
$\{\psi_{j,n}^{(u)}\}_{j\in\zz,n\in\Lz,u\in\Pz}$ is an orthonormal basis of $L^2(G)$,
we deduce that, for any $\az\in E$,
\begin{align}\label{3e9}
\mu^*\lf(E_\az\r)&=\lf\|\mathbf{1}_{E_\az}\r\|=\lf\|\lf(\mathbf{1}_{E_\az}\r)^\vee\r\|\\
&=\sum_{j\in\zz,n\in\Lz,u\in\Pz}\lf|\lf(\lf(\mathbf{1}_{E_\az}\r)^\vee,\psi_{j,n}^{(u)}\r)\r|^2\noz\\
&=\sum_{j\in\zz,n\in\Lz,u\in\Pz}\lf|\lf(\mathbf{1}_{E_\az},\widehat{\psi_{j,n}^{(u)}}\r)\r|^2\noz\\
&=\sum_{j\in\zz,n\in\Lz,u\in\Pz}\lf|\int_{G^*}\mathbf{1}_{E_\az}(\oz)
\overline{\widehat{\psi_{j,n}^{(u)}}(\oz)}\,d\mu^*(\oz)\r|^2.\noz
\end{align}
Note that, for any $u\in\Pz$, $j\in\zz$, $n\in\Lz$ and $\oz\in G^*$,
\begin{align}\label{3e10'}
\widehat{\psi^{(u)}_{j,n}}(\oz)
&=p^{j/2}\int_{G}\psi^{(u)}\lf(\rho^j(x)\mz n\r)\overline{\chi(x,\oz)}\,d\mu(x)\\
&=p^{-j/2}\int_{G}\psi^{(u)}(x)\overline{\chi\lf(\sz^j(x\pz n\r),\oz)}\,d\mu(x)\noz\\
&=p^{-j/2}\int_{G}\psi^{(u)}(x)
\overline{\chi\lf(x,\sz^j(\oz)\r)}\,\overline{\chi\lf(n,\sz^j(\oz)\r)}\,d\mu(x)\noz\\
&=p^{-j/2}\overline{\chi\lf(n,\sz^j(\oz)\r)}\widehat{\psi^{(u)}}\lf(\sz^j\oz\r).\noz
\end{align}
Then \eqref{3e9} takes the form
\begin{align*}
\mu^*\lf(E_\az\r)&=\sum_{j\in\zz,n\in\Lz,u\in\Pz}\lf|\int_{G^*}\mathbf{1}_{E_\az}(\oz)
p^{-j/2}\chi\lf(n,\sz^j(\oz)\r)\overline{\widehat{\psi^{(u)}}\lf(\sz^j(\oz)\r)}\,d\mu^*(\oz)\r|^2\\
&=\sum_{j\in\zz,n\in\Lz,u\in\Pz}p^{-j}\lf|\int_{G^*}\mathbf{1}_{E_\az}(\oz)
\chi\lf(n,\sz^j(\oz)\r)\mathbf{1}_{\rho^j(\Oz^{(u)})}(\oz)\,d\mu^*(\oz)\r|^2\\
&=\sum_{j\in\zz,n\in\Lz,u\in\Pz}p^{-j}\lf|\int_{E_\az\cap\rho^j(\Oz^{(u)})}
\chi\lf(n,\sz^j(\oz)\r)\,d\mu^*(\oz)\r|^2=0,
\end{align*}
where we have used the fact that $E_\az\cap\rho^j(\Oz^{(u)})=\emptyset$ for any $u\in\Pz$ and $j\in\zz$
to obtain the last equality. This, together with the $\sz$-compactness of $G^*$, implies that
\begin{align*}
\mu^*(E)\le\mu^*\lf(\bigcup_{\az\in E}E_\az\r)\le\sum_{\az\in E}\mu^*\lf(E_\az\r)=0.
\end{align*}
Thus, $\mu^*(E)=0$, which completes the proof of (i).

Finally, we prove (ii). To this end, for any $u\in\Pz$, let $\{n:\ n\in N^{(u)}\subset\zz_+\}$
be the set of all non-negative integers such that, for the corresponding $n\in\Lz$,
the set $\Oz_n^{(u)}:=\Oz^{(u)}\cap(U^*\pz n)$ has positive measure. Then
$\{\Oz_n^{(u)}:\ n\in N^{(u)}\}$ is a partition of $\Oz^{(u)}$ for any $u\in\Pz$. To show
\eqref{3e4} and \eqref{3e5}, it suffices to prove that, for any $u\in\Pz$ and almost every $\oz\in U^*$,
\begin{align}\label{3e11}
F^{(u)}(\oz):=\sum_{n\in N^{(u)}}\mathbf{1}_{\Oz_n^{(u)}\mz n}(\oz)=1
\end{align}
holds true. To verify \eqref{3e11}, by the Parseval equality and \eqref{3e10'}, we find that,
for any $u\in\Pz$, $j\in\zz$ and $m\in\Lz$,
\begin{align}\label{3e12}
\lf(\psi^{(u)}_{j,\theta},\psi^{(u)}_{j,m}\r)
&=\lf(\widehat{\psi^{(u)}_{j,\theta}},\widehat{\psi^{(u)}_{j,m}}\r)\\
&=\int_{G^*}\widehat{\psi^{(u)}_{j,\theta}}(\oz)\overline{\widehat{\psi^{(u)}_{j,m}}(\oz)}\,d\mu^*(\oz)\noz\\
&=\int_{G^*}p^{-j}\overline{\chi\lf(\theta,\sz^j(\oz)\r)}\widehat{\psi^{(u)}}\lf(\sz^j\oz\r)
\chi\lf(m,\sz^j(\oz)\r)\overline{\widehat{\psi^{(u)}}\lf(\sz^j\oz\r)}\,d\mu^*(\oz)\noz\\
&=\int_{G^*}p^{-j}\lf|\widehat{\psi^{(u)}}\lf(\sz^j\oz\r)\r|^2\chi\lf(m,\sz^j(\oz)\r)\,d\mu^*(\oz)\noz\\
&=\int_{G^*}\lf|\widehat{\psi^{(u)}}(\oz)\r|^2\chi\lf(m,\oz\r)\,d\mu^*(\oz)\noz\\
&=\int_{\Oz^{(u)}}\chi\lf(m,\oz\r)\,d\mu^*(\oz)\noz\\
&=\sum_{n\in N^{(u)}}\int_{\Oz_n^{(u)}}\chi\lf(m,\oz\r)\,d\mu^*(\oz)\noz\\
&=\sum_{n\in N^{(u)}}\frac1p\int_{\Oz_n^{(u)}}\chi\lf(m,p\oz\mz pn\r)\,d\mu^*(\oz)\noz\\
&=\sum_{n\in N^{(u)}}\int_{\Oz_n^{(u)}\mz n}\chi\lf(m,\oz\r)\,d\mu^*(\oz),\noz
\end{align}
where $\theta:=(\ldots,0,0,\ldots)$ denotes the identity element of $G^*$.

In addition, for any $u\in\Pz$ and almost every $\oz\in U^*$, let
\begin{align*}
F_M^{(u)}(\oz):=\sum_{n\in N_M^{(u)}}\mathbf{1}_{\Oz_n^{(u)}\mz n}(\oz),
\end{align*}
where $\{N_M^{(u)}\}$ is a family of finite subsets of $N^{(u)}$ such that
$N_M^{(u)}\to N^{(u)}$ as $M\to+\fz$. Then we have
\begin{align*}
\int_{U^*}F_M^{(u)}(\oz)\,d\mu^*(\oz)
&=\sum_{n\in N_M^{(u)}}\int_{U^*}\mathbf{1}_{\Oz_n^{(u)}\mz n}(\oz)\,d\mu^*(\oz)\\
&=\sum_{n\in N_M^{(u)}}\mu^*\lf(\Oz_n^{(u)}\mz n\r)=\sum_{n\in N_M^{(u)}}\mu^*\lf(\Oz_n^{(u)}\r).
\end{align*}
From this, the fact that both
$$0\le F_M^{(u)}(\oz)\le F^{(u)}(\oz)\quad {\rm and}\quad\lim_{M\to+\fz}F_M^{(u)}(\oz)=F^{(u)}(\oz)$$
hold true for any $u\in\Pz$ and almost every $\oz\in U^*$ as well as \eqref{3e8'}, we infer that
\begin{align*}
\int_{U^*}F^{(u)}(\oz)\,d\mu^*(\oz)
&=\lim_{M\to+\fz}\int_{U^*}F_M^{(u)}(\oz)\,d\mu^*(\oz)\\
&=\lim_{M\to+\fz}\sum_{n\in N_M^{(u)}}\mu^*\lf(\Oz_n^{(u)}\r)\\
&=\sum_{n\in N^{(u)}}\mu^*\lf(\Oz_n^{(u)}\r)=\mu^*\lf(\Oz^{(u)}\r)=1.
\end{align*}
Therefore, for any $u\in\Pz$, $F^{(u)}\in L^1(U^*)$. By this, \eqref{3e12} and the Lebesgue dominated
convergence theorem, we conclude that, for any $u\in\Pz$, $j\in\zz$ and $m\in\Lz$,
\begin{align*}
\lf(\psi^{(u)}_{j,\theta},\psi^{(u)}_{j,m}\r)
&=\lim_{M\to+\fz}\sum_{n\in N_M^{(u)}}\int_{\Oz_n^{(u)}\mz n}(\oz)\chi\lf(m,\oz\r)\,d\mu^*(\oz)\\
&=\lim_{M\to+\fz}\sum_{n\in N_M^{(u)}}
\int_{U^*}\mathbf{1}_{\Oz_n^{(u)}\mz n}\chi\lf(m,\oz\r)\,d\mu^*(\oz)\\
&=\lim_{M\to+\fz}\int_{U^*}F_M^{(u)}(\oz)\chi\lf(m,\oz\r)\,d\mu^*(\oz)\\
&=\int_{U^*}F^{(u)}(\oz)\chi\lf(m,\oz\r)\,d\mu^*(\oz).
\end{align*}
This, combined with the orthogonality that, for any $u\in\Pz$,
\begin{align*}
\lf(\psi^{(u)}_{j,\theta},\psi^{(u)}_{j,m}\r)
=\begin{cases}
1
&\text{if}\ m=\theta\\
0
&\text{if}\ m\neq \theta
\end{cases}
\end{align*}
and the uniqueness theorem for Fourier series, further implies the validity of \eqref{3e11}.
Thus, we have completed the proof of (ii) and hence of Theorem \ref{t1}.
\end{proof}

\begin{remark}\label{3r}
\begin{enumerate}
\item[{\rm (i)}]
As in Remark \ref{2r}, when $p=2$, the Vilenkin group $G$ is just the Cantor dyadic group.
Therefore, Theorem \ref{t1} includes \cite[Theorem 4]{ms21} as a special case.
\item[{\rm (ii)}]
A nice example of wavelet sets was given in \cite[Example 2.2]{mss23}; More examples of wavelet sets
can be also found in \cite{bb11,ms21}.
\end{enumerate}
\end{remark}

\section{An application to construction of MRA wavelets}\label{s4}

In this section, as an application of Theorem \ref{t1}, we establish the relation between
MRA and wavelets determined from wavelet sets. This provides another method, which is different
from that used in \cite{Far08}, for the construction of wavelets.

To prove the main result of this section, we need the following two technical lemmas.
The first one (see Lemma \ref{4l1} below) gives an equal relation between the associated scaling
function $\phi$ and the sequence $\{\psi^{(u)}\}_{u\in\Pz}$ of wavelets, which is of independent
interest. The second one (see Lemma \ref{4l2} below) shows that the elements of $V_0$ and $V_{-1}$
can be characterized by using the associated scaling filter $m_0$.

\begin{lemma}\label{4l1}
Let $\{V_j\}_{j\in\zz}$ be an {\rm MRA} as in Definition \ref{2d1}. Assume that $\phi$ is a scaling
function and $\{\psi^{(u)}\}_{u\in\Pz}$ a sequence of wavelets associated with $\{V_j\}_{j\in\zz}$.
Then, for almost every $\oz\in G^*$,
\begin{align}\label{4e1}
\lf|\widehat{\phi}(\oz)\r|^2
=\sum_{u\in\Pz}\sum_{j\in\nn}\lf|\widehat{\psi^{(u)}}\lf(\rho^j(\oz)\r)\r|^2.
\end{align}
\end{lemma}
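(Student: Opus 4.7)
The plan is to derive a pointwise one-step identity
$$\lf|\widehat{\phi}(\sz(\oz))\r|^2=\lf|\widehat{\phi}(\oz)\r|^2+\sum_{u\in\Pz}\lf|\widehat{\psi^{(u)}}(\oz)\r|^2\quad\text{for a.e. }\oz\in G^*,$$
iterate it into a telescoping formula, and then verify that the residual term vanishes in the limit. The central algebraic ingredient is that the $p\times p$ ``filter matrix'' built from $m_0$ and $\{m^{(u)}_1\}_{u\in\Pz}$ and their shifts by $0.x$ is unitary for almost every $\oz\in G^*$.

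More precisely, I first read the four identities \eqref{2e7}--\eqref{2e9'} of Lemma \ref{2l4} as the statement that, upon setting $m^{(0)}_1:=m_0$ and adopting the convention $0.0:=\theta$, the $p$ vectors
$$v_u(\oz):=\lf(m^{(u)}_1(\oz),\,m^{(u)}_1(\oz\pz 0.1),\,\ldots,\,m^{(u)}_1(\oz\pz 0.(p-1))\r),\qquad u\in\{0\}\cup\Pz,$$
form an orthonormal system in $\mathbb{C}^p$ for a.e. $\oz\in G^*$. Since we have $p$ such orthonormal vectors in $\mathbb{C}^p$, the matrix with them as rows is unitary, and hence its columns are orthonormal as well; reading off the squared norm of the first (i.e., $y=0$) column gives
$$|m_0(\oz)|^2+\sum_{u\in\Pz}\lf|m^{(u)}_1(\oz)\r|^2=1\quad\text{for a.e. }\oz\in G^*.$$
Substituting $\sz(\oz)$ for $\oz$ and invoking the refinement equations of Lemma \ref{2l3}, I obtain
$$\lf|\widehat{\phi}(\oz)\r|^2+\sum_{u\in\Pz}\lf|\widehat{\psi^{(u)}}(\oz)\r|^2=\lf[|m_0(\sz(\oz))|^2+\sum_{u\in\Pz}\lf|m^{(u)}_1(\sz(\oz))\r|^2\r]\lf|\widehat{\phi}(\sz(\oz))\r|^2=\lf|\widehat{\phi}(\sz(\oz))\r|^2$$
for a.e. $\oz\in G^*$. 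Applying this with $\oz$ replaced by $\rho^j(\oz)$ (so that $\sz(\rho^j(\oz))=\rho^{j-1}(\oz)$) and telescoping over $j=1,\ldots,N$ leads to
$$\lf|\widehat{\phi}(\oz)\r|^2-\lf|\widehat{\phi}(\rho^N(\oz))\r|^2=\sum_{u\in\Pz}\sum_{j=1}^{N}\lf|\widehat{\psi^{(u)}}(\rho^j(\oz))\r|^2.$$

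The remaining, and most delicate, step is to show that $|\widehat{\phi}(\rho^N(\oz))|^2\to 0$ as $N\to\fz$ for a.e. $\oz\in G^*$; this is the main obstacle. I would handle it by a change of variables, using that the dilation $\rho$ scales the Haar measure $\mu^*$ by the factor $p$, so that
$$\int_{G^*}\lf|\widehat{\phi}(\rho^N(\oz))\r|^2\,d\mu^*(\oz)=p^{-N}\lf\|\widehat{\phi}\r\|^2=p^{-N}$$
for every $N\in\nn$, where the last equality uses $\|\phi\|=1$ from Lemma \ref{2l1} together with the Parseval equality. Since $\sum_{N\in\nn}p^{-N}<\fz$, Tonelli's theorem forces $\sum_{N\in\nn}|\widehat{\phi}(\rho^N(\oz))|^2$ to be finite for a.e. $\oz\in G^*$, whence its general term tends to zero a.e. Letting $N\to\fz$ in the telescoping identity and invoking the Monotone Convergence Theorem on the right-hand side then yields \eqref{4e1}. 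Apart from this limit step, the two technical points to watch are the bookkeeping of a.e. equalities across $j\in\nn$ (resolved by a countable union of null sets) and the precise scaling of $\mu^*$ under $\rho$.
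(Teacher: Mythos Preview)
Your proof is correct and follows the same overall architecture as the paper's: establish the one-step identity $\lf|\widehat{\phi}(\sz(\oz))\r|^2=\lf|\widehat{\phi}(\oz)\r|^2+\sum_{u\in\Pz}\lf|\widehat{\psi^{(u)}}(\oz)\r|^2$, telescope it into \eqref{4e2}, and then kill the residual term $\lf|\widehat{\phi}(\rho^N(\oz))\r|^2$ as $N\to\fz$.

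The execution of the two substeps differs in ways worth recording. For the one-step identity, the paper does not read off column orthonormality directly; instead it invokes an external bound from \cite{Far08} to argue that every entry of the unitary matrix has modulus at most $1/\sqrt{p}$, and then that row-normalization forces $|m_0|\equiv|m_1^{(u)}|\equiv1/\sqrt{p}$ a.e. Your route---rows orthonormal in $\mathbb{C}^p$ $\Rightarrow$ matrix unitary $\Rightarrow$ columns orthonormal $\Rightarrow$ $|m_0(\oz)|^2+\sum_{u\in\Pz}|m_1^{(u)}(\oz)|^2=1$---uses only Lemma \ref{2l4} and is both shorter and self-contained; it also avoids committing to the stronger constant-modulus conclusion. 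For the vanishing of the residual, the paper first uses Lemma \ref{2l2} to get $|\widehat{\phi}|\le 1$, deduces that the partial sums on the right of \eqref{4e2} are bounded and increasing (hence convergent), and then applies Fatou's lemma to $\lf|\widehat{\phi}(\rho^N(\oz))\r|^2$ together with the scaling $\int_{G^*}|\widehat{\phi}(\rho^N(\oz))|^2\,d\mu^*(\oz)=p^{-N}\|\widehat{\phi}\|^2$. Your summability argument via Tonelli reaches the same conclusion without the preliminary boundedness step. Both variants are valid; yours is slightly more economical.
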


\begin{proof}
If $p=2$, then \eqref{4e1} can be found in \cite[p.\,14]{ms21}.

If $p\in[3,\fz)\cap\nn$, then, by Lemma \ref{2l4}, we know that, for almost every $\oz\in G^*$,
\begin{align*}
A(\oz):=\lf(\begin{array}{ccccccc}
\vspace{0.75cm}
\overline{m_0(\oz)} & m_1^{(1)}(\oz) & \cdots & m_1^{(p-1)}(\oz)\\
\vspace{0.2cm}
\overline{m_0(\oz\pz0.1)} & m_1^{(1)}(\oz\pz0.1) & \cdots & m_1^{(p-1)}(\oz\pz0.1)\\
\vspace{0.2cm}
\vdots & \vdots& & \vdots\\
\overline{m_0(\oz\pz0.p-1)} \quad& m_1^{(1)}(\oz\pz0.p-1) & \cdots & m_1^{(p-1)}(\oz\pz0.p-1)\\
\end{array}\r)
\end{align*}
is a unitary matrix, where $m_0$ and $\{m_1^{(u)}\}_{u\in\Pz}$ are, respectively,
the scaling filter and wavelet filters as in Lemma \ref{2l3}. On another hand, from
the argument in \cite[pp.\,319-323]{Far08}, we deduce that, for any $\oz\in G^*$,
$$|m_0(\oz)|\le\frac1{\sqrt{p}}\quad{\rm and}\quad\lf|m_1^{(u)}(\oz)\r|\le\frac1{\sqrt{p}},
\quad\forall\,u\in\Pz.$$
This, together with the above unitarity of $A(\oz)$, further implies that,
for almost every $\oz\in G^*$,
\begin{align*}
|m_0(\oz)|=\frac1{\sqrt{p}}\quad{\rm and}\quad\lf|m_1^{(u)}(\oz)\r|=\frac1{\sqrt{p}},
\quad\forall\,u\in\Pz.
\end{align*}
By this and Lemma \ref{2l3}, we find that, for almost every $\oz\in G^*$,
\begin{align*}
\lf|\widehat{\phi}(\rho(\oz))\r|^2+\sum_{u\in\Pz}\lf|\widehat{\psi^{(u)}}(\rho(\oz))\r|^2
&=\lf|m_0(\oz)\widehat{\phi}(\oz)\r|^2+
\sum_{u\in\Pz}\lf|m_1^{(u)}(\oz)\widehat{\phi}(\oz)\r|^2\\
&=\lf|\widehat{\phi}(\oz)\r|^2.
\end{align*}
Iterating this relation, we obtain
\begin{align}\label{4e2}
\lf|\widehat{\phi}(\oz)\r|^2
=\lf|\widehat{\phi}\lf(\rho^N(\oz)\r)\r|^2+\sum_{j\in[1,N]\cap\nn}\sum_{u\in\Pz}
\lf|\widehat{\psi^{(u)}}\lf(\rho^j(\oz)\r)\r|^2,\quad\forall\,N\in\nn.
\end{align}

In addition, from Lemma \ref{2l1}, it follows that $\{\phi(\cdot\pz n)\}_{n\in\Lz}$ is an
orthonormal set in $L^2(G)$. This, combined with Lemma \ref{2l2}, implies that
\begin{align*}
\sum_{n\in\Lz}\lf|\widehat{\phi}(\oz\pz n)\r|^2=1\ {\rm for\ almost\ every}\ \oz\in G^*.
\end{align*}
Thus, $|\widehat{\phi}(\oz)|\le1$ for almost every $\oz\in G^*$. By this and \eqref{4e2},
it is easy to see that
$$\lf\{\sum_{j\in[1,N]\cap\nn}\sum_{u\in\Pz}
\lf|\widehat{\psi^{(u)}}\lf(\rho^j(\oz)\r)\r|^2\r\}_{N\in\nn}$$
is an increasing sequence with an upper bound of 1 and hence its limit exists. Therefore,
the sequence $\{|\widehat{\phi}(\rho^N(\oz))|^2\}_{N\in\nn}$ in \eqref{4e2} is also convergent.
From this, the Fatou lemma and the fact that $\widehat{\phi}\in L^2(G^*)$, we infer that
\begin{align*}
\int_{G^*}\lim_{N\to\fz}\lf|\widehat{\phi}\lf(\rho^N(\oz)\r)\r|^2\,d\mu^*(\oz)
&\le\lim_{N\to\fz}\int_{G^*}\lf|\widehat{\phi}\lf(\rho^N(\oz)\r)\r|^2\,d\mu^*(\oz)\\
&=\lim_{N\to\fz}\frac1{2^N}\int_{G^*}\lf|\widehat{\phi}(\oz)\r|^2\,d\mu^*(\oz)\\
&=0.
\end{align*}
By this and \eqref{4e2}, we conclude that \eqref{4e1} holds true for the case when
$p\in[3,\fz)\cap\nn$. Thus, the proof of Lemma \ref{4l1} is finished.
\end{proof}

\begin{lemma}\label{4l2}
Let $\{V_j\}_{j\in\zz}$ be an {\rm MRA} as in Definition \ref{2d1}. Assume that $\phi$ and $m_0$
are, respectively, the associated scaling function and filter. Then, for any $f\in L^2(G)$,
the following two items hold true:
\begin{enumerate}
\item[{\rm (i)}]
$f\in V_0$ if and only if there exists some $\Lz$-periodic function $g\in L^2(U^*)$ such that,
for any $\oz\in U^*$,
$$\widehat{f}(\oz)=g(\oz)\widehat{\phi}(\oz);$$
\item[{\rm (ii)}]
$f\in V_{-1}$ if and only if there exists some $\Lz$-periodic function $h\in L^2(U^*)$ such that,
for any $\oz\in U^*$,
$$\widehat{f}(\oz)=h(\rho(\oz))m_0(\oz)\widehat{\phi}(\oz).$$
\end{enumerate}
\end{lemma}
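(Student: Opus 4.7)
The approach is to translate both membership conditions into statements about Fourier series on the fundamental domain $U^*$, exploiting three ingredients: the Riesz (in fact orthonormal, by Lemma \ref{2l1}) basis of $V_0$ and $V_{-1}$, the orthonormal system $\{\chi(n,\cdot)\}_{n\in\Lz}$ on $U^*$, and the partition identity $\sum_{n\in\Lz}|\widehat{\phi}(\oz\pz n)|^2=1$ that follows from Lemmas \ref{2l1} and \ref{2l2}.

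For the necessity in (i), I would begin with the expansion $f=\sum_{n\in\Lz}c_n\phi(\cdot\mz n)$ with $\{c_n\}\in\ell^2$. Since a routine change of variable shows $(\phi(\cdot\mz n))^\wedge(\oz)=\overline{\chi(n,\oz)}\widehat{\phi}(\oz)$, summing gives $\widehat{f}(\oz)=g(\oz)\widehat{\phi}(\oz)$ with $g(\oz):=\sum_{n\in\Lz}c_n\overline{\chi(n,\oz)}$; this $g$ is automatically $\Lz$-periodic and belongs to $L^2(U^*)$ because $\{c_n\}\in\ell^2$. For the sufficiency, any $\Lz$-periodic $g\in L^2(U^*)$ admits a Fourier expansion $g=\sum_{n\in\Lz}c_n\overline{\chi(n,\cdot)}$ with $\{c_n\}\in\ell^2$, and reversing the previous computation identifies the corresponding $f$ with $\sum_{n\in\Lz}c_n\phi(\cdot\mz n)\in V_0$.

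The strategy for (ii) is parallel but passes through the scaling equation of Lemma \ref{2l3}(i). Starting from $f=\sum_{n\in\Lz}d_n\phi_{-1,n}$ with $\{d_n\}\in\ell^2$, the same change-of-variable computation carried out in \eqref{3e10'} (specialized to $j=-1$) yields $\widehat{\phi_{-1,n}}(\oz)=p^{1/2}\overline{\chi(n,\rho(\oz))}\widehat{\phi}(\rho(\oz))$. Summation then produces $\widehat{f}(\oz)=h(\rho(\oz))\widehat{\phi}(\rho(\oz))$ with $h(\eta):=p^{1/2}\sum_{n\in\Lz}d_n\overline{\chi(n,\eta)}$, which is $\Lz$-periodic and in $L^2(U^*)$; invoking Lemma \ref{2l3}(i) to rewrite $\widehat{\phi}(\rho(\oz))=m_0(\oz)\widehat{\phi}(\oz)$ gives the desired factorization. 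The converse simply retraces these steps: one expands the given $h$ in the character system, substitutes, and recognizes the resulting series as $\sum_{n\in\Lz}d_n\phi_{-1,n}\in V_{-1}$.

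The main technical point is verifying that the formal correspondences $\{c_n\}\leftrightarrow g$ and $\{d_n\}\leftrightarrow h$ are genuine isometries in the relevant $L^2$ norms, so that the abstract hypothesis $g\in L^2(U^*)$ (resp.\ $h\in L^2(U^*)$) delivers the $\ell^2$-summable coefficients needed for a Riesz-basis expansion. The crucial identity,
\begin{align*}
\int_{G^*}\lf|g(\oz)\widehat{\phi}(\oz)\r|^2\,d\mu^*(\oz)
=\int_{U^*}|g(\oz)|^2\sum_{n\in\Lz}\lf|\widehat{\phi}(\oz\pz n)\r|^2\,d\mu^*(\oz)
=\int_{U^*}|g(\oz)|^2\,d\mu^*(\oz),
\end{align*}
follows from the $\Lz$-periodicity of $g$ together with Lemma \ref{2l2}; the analogous identity in (ii) additionally requires tracking the dilation factor $|\det\rho|=p$ that appears under the substitution $\eta=\rho(\oz)$.
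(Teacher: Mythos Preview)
Your proposal is correct and follows essentially the same approach as the paper: expand $f$ in the orthonormal basis $\{\phi(\cdot\mz n)\}_{n\in\Lz}$ of $V_0$ (respectively $\{\phi_{-1,n}\}_{n\in\Lz}$ of $V_{-1}$), pass to the Fourier side, and identify the coefficient series as a $\Lz$-periodic $L^2(U^*)$-function via the character system $\{\chi(n,\cdot)\}_{n\in\Lz}$, with the scaling equation of Lemma~\ref{2l3}(i) supplying the factor $m_0$ in (ii). Your treatment of the converse and of the $\ell^2\leftrightarrow L^2(U^*)$ isometry via Lemma~\ref{2l2} is actually more explicit than the paper, which simply omits those details.
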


\begin{proof}
We prove the present lemma by two steps.

\emph{Step 1}. In this step, we show (i). For the necessity, let $f\in V_0$. Note that
Lemma \ref{2l1} implies that the set $\{\phi(\cdot\mz n)\}_{n\in\Lz}$ forms an orthonormal basis
of $V_0$. Then, for any $x\in G$, we have
\begin{align*}
f(x)=\sum_{n\in\Lz}t_n\phi(x\mz n),
\end{align*}
where, for any $n\in\Lz$,
\begin{align*}
t_n:=\int_G f(x)\overline{\phi(x\mz n)}\,d\mu(x).
\end{align*}
Moreover, for any $\oz\in U^*$,
\begin{align}\label{4e3}
\widehat{f}(\oz)&=\int_Gf(x)\overline{\chi(x,\oz)}\,d\mu(x)\\
&=\int_G\sum_{n\in\Lz}t_n\phi(x\mz n)\overline{\chi(x,\oz)}\,d\mu(x)\noz\\
&=\int_G\sum_{n\in\Lz}t_n\phi(z)\overline{\chi(z\pz n,\oz)}\,d\mu(z)\noz\\
&=\int_G\sum_{n\in\Lz}t_n\phi(z)\overline{\chi(z,\oz)}\,\overline{\chi(n,\oz)}\,d\mu(z)\noz\\
&=\sum_{n\in\Lz}t_n\overline{\chi(n,\oz)}\int_G\phi(z)\overline{\chi(z,\oz)}\,d\mu(z)\noz\\
&=g(\oz)\widehat{\phi}(\oz),\noz
\end{align}
where
\begin{align*}
g(\oz):=\sum_{n\in\Lz}t_n\overline{\chi(n,\oz)}
\end{align*}
is a $\Lz$-periodic function which belongs to $L^2(U^*)$, since the system
$\{\chi(n,\cdot)\}_{n\in\Lz}$ is an orthogonal basis of $L^2(U^*)$ (see, for instance,
\cite[p.\,311]{Far08}). The proof of the necessity of (i) is completed.

The sufficiency of (i) can be verified by an argument similar as above; the details are omitted.
This shows (i).

\emph{Step 2}. In this step, we prove (ii). Observe that the set
$\{\frac1{\sqrt{p}}\phi(\sz(\cdot)\mz n)\}_{n\in\Lz}$ forms an orthonormal basis of $V_{-1}$
(see Lemma \ref{2l1}). For any given $f\in V_{-1}$, similarly to \eqref{4e3}, we obtain that,
for any $\oz\in U^*$,
\begin{align*}
\widehat{f}(\oz)
&=\sum_{n\in\Lz}r_n\overline{\chi(\rho(n),\oz)}\int_G\phi(z)\overline{\chi(\rho(z),\oz)}\,d\mu(z)\\
&=h(\rho(\oz))\widehat{\phi}(\rho(\oz))=h(\rho(\oz))m_0(\oz)\widehat{\phi}(\oz),
\end{align*}
where
\begin{align*}
r_n:=\int_G f(x)\overline{\phi(\sz(x)\mz n)}\,d\mu(x)\quad{\rm and}\quad
h(\oz):=\sum_{n\in\Lz}r_n\overline{\chi(n,\oz)}
\end{align*}
is a $\Lz$-periodic function which belongs to $L^2(U^*)$. This finishes the proof of (ii)
and hence Lemma \ref{4l2}.
\end{proof}

The succeeding theorem is the second main result of this article, which
provides a different method from that used in \cite{Far08} for the construction of wavelets.

\begin{theorem}\label{t2}
Let $G$ be a Vilenkin group and $\Oz=\{\Oz^{(u)}\}_{u\in\Pz}\subset G^*$ a wavelet set.
For any given $u\in\Pz$, let
$$\widehat{\psi^{(u)}}:=\mathbf{1}_{\Oz^{(u)}}.$$
Then $\{\psi^{(u)}\}_{u\in\Pz}$ is associated with
an {\rm MRA} if and only if
\begin{align}\label{4e4}
\mu^*\lf(\Oz_\Sigma\cap\lf(\Oz_\Sigma\pz n\r)\r)=
\begin{cases}
1
&if\ n=\theta,\\
0
&if\ n\in\Lz\backslash\{\theta\},
\end{cases}
\end{align}
where $\theta:=(\ldots,0,0,\ldots)$ denotes the identity element of $G^*$ and
$$\Oz_\Sigma:=\bigcup_{u\in\Pz}\bigcup_{j\in\nn}\sz^j\lf(\Oz^{(u)}\r).$$
\end{theorem}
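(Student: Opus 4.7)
The natural strategy is to construct the candidate scaling function directly from the wavelet set: define $\phi\in L^2(G)$ by $\widehat{\phi}:=\mathbf{1}_{\Oz_\Sigma}$. The choice is dictated by Lemma \ref{4l1}, since any orthonormal scaling function of the putative MRA must satisfy
\begin{align*}
\lf|\widehat{\phi}(\oz)\r|^2
=\sum_{u\in\Pz}\sum_{j\in\nn}\lf|\widehat{\psi^{(u)}}(\rho^j(\oz))\r|^2
=\sum_{u\in\Pz}\sum_{j\in\nn}\mathbf{1}_{\sz^j(\Oz^{(u)})}(\oz)
=\mathbf{1}_{\Oz_\Sigma}(\oz)
\end{align*}
for almost every $\oz\in G^*$, where the last equality uses the disjointness from Theorem \ref{t1}(i).

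For the necessity, suppose $\{\psi^{(u)}\}_{u\in\Pz}$ is associated with an MRA $\{V_j\}_{j\in\zz}$. Invoking Lemma \ref{2l1} we may take an orthonormal scaling function $\wz{\phi}$ for this MRA. Applying Lemma \ref{4l1} to the pair $(\wz{\phi},\{\psi^{(u)}\}_{u\in\Pz})$, combined with the disjointness from Theorem \ref{t1}(i), yields $|\widehat{\wz{\phi}}|^2=\mathbf{1}_{\Oz_\Sigma}$ almost everywhere. The orthonormality of $\{\wz{\phi}(\cdot\mz n)\}_{n\in\Lz}$ then forces, via Lemma \ref{2l2}, the tiling identity
\begin{align*}
\sum_{n\in\Lz}\mathbf{1}_{\Oz_\Sigma}(\oz\pz n)=1 \quad \text{for almost every } \oz\in G^*.
\end{align*}
A routine change of variables using the abelianness of $(G^*,\pz)$ converts this into \eqref{4e4}: the $n=\theta$ case recovers $\mu^*(\Oz_\Sigma)=1$, and the pairwise disjointness of the translates $\{\Oz_\Sigma\mz n\}_{n\in\Lz}$ is precisely $\mu^*(\Oz_\Sigma\cap(\Oz_\Sigma\pz n))=0$ for $n\in\Lz\setminus\{\theta\}$.

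For the sufficiency, assume \eqref{4e4}, set $\widehat{\phi}:=\mathbf{1}_{\Oz_\Sigma}$, and for each $j\in\zz$ let $V_j$ denote the closed span of $\{\phi_{j,n}:n\in\Lz\}$ in $L^2(G)$. I will then verify the four axioms of Definition \ref{2d1}. The orthonormality of $\{\phi(\cdot\mz n)\}_{n\in\Lz}$ (hence axiom (iv)) is immediate from \eqref{4e4} together with the converse direction of Lemma \ref{2l2}; axioms (ii) and (iii) are built into the definition of $V_j$. For axiom (i) the crucial step is the Fourier-side identification $\widehat{V_j}=L^2(\rho^j\Oz_\Sigma)$. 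Using the character identity $\chi(\sz^j(n),\rho^j(\oz))=\chi(n,\oz)$ one computes $\widehat{\phi_{j,n}}(\oz)=p^{-j/2}\overline{\chi(n,\sz^j(\oz))}\mathbf{1}_{\rho^j\Oz_\Sigma}(\oz)$, and condition \eqref{4e4}, which says that $\{\Oz_\Sigma\mz n\}_{n\in\Lz}$ tiles $U^*$, allows any $F\in L^2(\rho^j\Oz_\Sigma)$ to be expanded in the system $\{\widehat{\phi_{j,n}}\}_{n\in\Lz}$ through the $L^2(U^*)$-orthonormal basis $\{\chi(n,\cdot)\}_{n\in\Lz}$. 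Granted this identification, axiom (i) reduces to the set-theoretic assertions $\Oz_\Sigma\subseteq\rho\Oz_\Sigma$, $\bigcup_{j\in\zz}\rho^j\Oz_\Sigma=G^*$, and $\bigcap_{j\in\zz}\rho^j\Oz_\Sigma=\emptyset$ (all modulo null sets), each following directly from the disjoint tiling in Theorem \ref{t1}(i). Finally, $\widehat{W_0}=L^2(\rho\Oz_\Sigma\setminus\Oz_\Sigma)=L^2(\bigcup_{u\in\Pz}\Oz^{(u)})$, and Theorem \ref{t1}(ii) provides $\{\psi^{(u)}(\cdot\mz n)\}_{u\in\Pz,n\in\Lz}$ as an orthonormal basis of $W_0$, showing that $\{\psi^{(u)}\}_{u\in\Pz}$ is indeed the wavelet system of the MRA just constructed.

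The main obstacle is the Fourier-side identification $\widehat{V_j}=L^2(\rho^j\Oz_\Sigma)$ in the sufficiency direction. It is exactly here that \eqref{4e4} is indispensable: without the tiling of $U^*$ by $\{\Oz_\Sigma\mz n\}_{n\in\Lz}$, arbitrary $L^2$ functions supported in $\rho^j\Oz_\Sigma$ could not be synthesized from the character expansion indexed by $\Lz$. Careful bookkeeping of the dilation normalizations (such as $\mu^*(\rho(A))=p\mu^*(A)$) and of the character identity $\chi(\sz(x),\oz)=\chi(x,\sz(\oz))$ will occupy the bulk of the technical work.
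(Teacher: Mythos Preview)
Your necessity argument coincides with the paper's: both invoke Lemma \ref{4l1} to force $|\widehat{\phi}|^2=\mathbf{1}_{\Oz_\Sigma}$ and then read off \eqref{4e4} from Lemma \ref{2l2}.

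For the sufficiency, however, you take a genuinely different route. The paper proceeds filter-theoretically: it writes down explicit $\Lz$-periodic functions $m_0$ and $m_1^{(u)}$ supported (before periodization) on $\Oz_\Sigma$, verifies the two-scale relations $\widehat{\phi}(\rho\oz)=m_0(\oz)\widehat{\phi}(\oz)$ and $\widehat{\psi^{(u)}}(\rho\oz)=m_1^{(u)}(\oz)\widehat{\phi}(\oz)$, and then, defining $V_0:=\bigoplus_{\ell<0}W_\ell$ from the wavelet side, proves the equality $V_0=\overline{\mathrm{span}}\{\phi(\cdot\mz n)\}$ by a two-way inclusion that uses Lemma \ref{4l2} and the filter orthogonality relations of Lemma \ref{2l4}. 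Your approach instead defines $V_j$ directly from $\phi$ and establishes the Fourier-side identification $\widehat{V_j}=L^2(\rho^j\Oz_\Sigma)$, reducing every MRA axiom to an elementary set-theoretic statement about dilates of $\Oz_\Sigma$ that follows from Theorem \ref{t1}(i), and finally reads off $\widehat{W_0}=L^2\bigl(\bigcup_{u}\Oz^{(u)}\bigr)$ with Theorem \ref{t1}(ii) supplying the orthonormal basis $\{\psi^{(u)}(\cdot\mz n)\}$. This is correct and more transparent for the result at hand; the paper's longer argument has the compensating virtue of exhibiting the scaling and wavelet filters explicitly, tying the construction back into the general MRA machinery of Lemmas \ref{2l3}--\ref{2l4}. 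One small wording issue: \eqref{4e4} does not literally say that $\{\Oz_\Sigma\mz n\}_{n\in\Lz}$ tiles $U^*$; rather it gives pairwise disjointness together with $\mu^*(\Oz_\Sigma)=1$, and you should spell out the short measure-theoretic step (the $\{0,1\}$-valued $\Lz$-periodic function $\sum_n\mathbf{1}_{\Oz_\Sigma}(\cdot\pz n)$ has integral $1$ over $U^*$, hence equals $1$ a.e.) that converts this into the $\Lz$-congruence of $\Oz_\Sigma$ with $U^*$ on which your key identification $\widehat{V_0}=L^2(\Oz_\Sigma)$ rests.
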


\begin{proof}
We prove the present theorem by two steps.

\emph{Step 1}. In this step, we show the necessity. For this purpose, by the assumption $\Oz=\{\Oz^{(u)}\}_{u\in\Pz}$ is a wavelet set and Theorem \ref{t1}, we find that the sets in the union
of $\Oz_\Sigma$ are almost mutually disjoint, that is,
for any $u_1,u_2\in\Pz$ and $j,k\in\nn$ with $u_1\neq u_2$ or $j\neq k$,
\begin{align}\label{4e5}
\mu^*\lf(\sz^j\lf(\Oz^{(u_1)}\r)\cap\sz^k\lf(\Oz^{(u_2)}\r)\r)=0
\end{align}
and hence
\begin{align}\label{4e6}
\mu^*\lf(\Oz_\Sigma\r)&=\mu^*\lf(\bigcup_{u\in\Pz}\bigcup_{j\in\nn}\sz^j\lf(\Oz^{(u)}\r)\r)\\
&=\sum_{u\in\Pz}\sum_{j\in\nn}\mu^*\lf(\sz^j\lf(\Oz^{(u)}\r)\r)=1,\noz
\end{align}
where we used \eqref{3e1} to obtain the last equal relation.

Assume now $\{\psi^{(u)}\}_{u\in\Pz}$ is associated with an MRA. Then, from Lemma \ref{4l1}, we deduce that
there exists some scaling function $\phi$ such that, for almost every $\oz\in G^*$,
\begin{align*}
\lf|\widehat{\phi}(\oz)\r|^2
=\sum_{u\in\Pz}\sum_{j\in\nn}\lf|\widehat{\psi^{(u)}}\lf(\rho^j(\oz)\r)\r|^2.
\end{align*}
This, together with the definitions of both $\{\psi^{(u)}\}_{u\in\Pz}$ and $\Oz_\Sigma$
as well as \eqref{4e5}, implies that
\begin{align}\label{4e7}
\widehat{\phi}(\oz)=\mathbf{1}_{\Oz_\Sigma}(\oz)\quad\rm{for\ almost\ every}\ \oz\in G^*.
\end{align}
Observe that $\{\phi(\cdot\pz n)\}_{n\in\Lz}$ is an orthonormal set. From Lemma \ref{2l1},
it follows that, for almost every $\oz\in G^*$,
\begin{align}\label{4e8}
\sum_{n\in\Lz}\lf|\widehat{\phi}(\oz\pz n)\r|^2=1.
\end{align}
Combining this, \eqref{4e6} and \eqref{4e7}, we conclude that \eqref{4e4} holds true. Thus,
the proof of the necessity of the present theorem is completed.

\emph{Step 2}. In this step, we prove the sufficiency. To this end, suppose that $\Oz_\Sigma$
satisfies \eqref{4e4}.
For almost every $\oz\in G^*$, let
\begin{align}\label{4e8'}
\widehat{\phi}(\oz):=\mathbf{1}_{\Oz_\Sigma}(\oz).
\end{align}
Then, by \eqref{4e4} and \eqref{4e6}, we know that \eqref{4e8} is true and hence
$\{\phi(\cdot\mz n)\}_{n\in\Lz}$ forms an orthonormal system on account of Lemma \ref{2l1}.

For any given $u\in\Pz$ and any $\oz\in \Oz_\Sigma$, let
\begin{align*}
m_0(\oz):=
\begin{cases}
\vspace{0.2cm}
0
&\text{if}\ \oz\in\displaystyle\bigcup_{u\in\Pz}\sz\lf(\Oz^{(u)}\r),\\
1
&\text{if}\ \oz\in\displaystyle\bigcup_{u\in\Pz}\bigcup_{j\in[2,\fz)\cap\nn}\sz^j\lf(\Oz^{(u)}\r)
\end{cases}
\end{align*}
and
\begin{align*}
m_1^{(u)}(\oz):=
\begin{cases}
\vspace{0.2cm}
1
&\text{if}\ \oz\in\sz\lf(\Oz^{(u)}\r),\\
0
&\text{if}\ \oz\in\Oz_\Sigma\backslash\sz\lf(\Oz^{(u)}\r).
\end{cases}
\end{align*}
In addition, from Theorem \ref{t1}, we infer that $\{\Oz_\Sigma\pz n:\ n\in\Lz\}$ partitions $G^*$.
Therefore, every element of
$\{m_0,m_1^{(u)}:\ u\in\Pz\}$ can be uniquely extended to a $\Lz$-periodic function on $G^*$.

For any given $u\in\Pz$ and $\oz\in G^*$, let $\widehat{\psi^{(u)}}(\oz):=\mathbf{1}_{\Oz^{(u)}}(\oz)$.
Next, we show that the set $\{\psi^{(u)}\}_{u\in\Pz}$ is associated with
an MRA by two substeps.

\emph{Substep 2.1}. In this substep, we prove that, for any given $u\in\Pz$ and almost every $\oz\in G^*$,
the following two equalities hold true:
\begin{align}\label{4e9}
\widehat{\phi}(\rho(\oz))=m_0(\oz)\widehat{\phi}(\oz)
\end{align}
and
\begin{align}\label{4e10}
\widehat{\psi^{(u)}}(\rho(\oz))=m_1^{(u)}(\oz)\widehat{\phi}(\oz).
\end{align}

For \eqref{4e10}, if $\oz\in\sz(\Oz^{(u)})$ with $u\in\Pz$, then $m_1^{(u)}(\oz)=1$. By this, the definitions
of both $\widehat{\psi^{(u)}}$ and $\widehat{\phi}$, we find that, for almost every $\oz\in\sz(\Oz^{(u)})$,
\begin{align}\label{4e11}
\widehat{\psi^{(u)}}(\rho(\oz))=\mathbf{1}_{\Oz^{(u)}}(\rho(\oz))
=\mathbf{1}_{\sz(\Oz^{(u)})}(\oz)=1=m_1^{(u)}(\oz)\widehat{\phi}(\oz).
\end{align}
If $\oz\in G^*\backslash\sz(\Oz^{(u)})$, then similarly to \eqref{4e11},
we have
\begin{align*}
\widehat{\psi^{(u)}}(\rho(\oz))=0=
\begin{cases}
\vspace{0.2cm}
0\cdot\widehat{\phi}(\oz)=m_1^{(u)}(\oz)\widehat{\phi}(\oz)
&\text{for\ almost\ every}\ \oz\in\Oz_\Sigma\backslash\sz\lf(\Oz^{(u)}\r),\\
m_1^{(u)}(\oz)\cdot 0=m_1^{(u)}(\oz)\widehat{\phi}(\oz)
&\text{for\ almost\ every}\ \oz\in G^*\backslash\Oz_\Sigma.
\end{cases}
\end{align*}
This, combined with \eqref{4e11}, finishes the proof of \eqref{4e10}.

For \eqref{4e9}, it is sufficient to show that, for almost every $\oz\in G^*$,
\begin{align}\label{4e12}
\widehat{\phi}(\oz)=\prod_{j\in\nn}m_0\lf(\sz^j(\oz)\r).
\end{align}
Indeed, it follows from \eqref{4e12} that, for almost every $\oz\in G^*$,
\begin{align*}
\widehat{\phi}(\rho(\oz))&=\prod_{j\in\nn}m_0\lf(\sz^{j-1}(\oz)\r)\\
&=m_0(\oz)\prod_{j\in\nn}m_0\lf(\sz^j(\oz)\r)=m_0(\oz)\widehat{\phi}(\oz).
\end{align*}
This is just \eqref{4e9}.

Next, we prove \eqref{4e12}. Note that, for almost every $\oz\in G^*$,
there exists a unique integer $j$ such that
$$\oz\in\bigcup_{u\in\Pz}\rho^j\lf(\Oz^{(u)}\r)=\bigcup_{u\in\Pz}\sz^{-j}\lf(\Oz^{(u)}\r).$$
If $j\in\zz_+$,
then $\oz\in G^*\backslash\Oz_\Sigma$ and hence $\widehat{\phi}(\oz)=0$. Moreover,
\begin{align*}
\oz\in\bigcup_{u\in\Pz}\rho^j\lf(\Oz^{(u)}\r)
&\Longrightarrow\sz^j(\oz)\in\bigcup_{u\in\Pz}\Oz^{(u)}
\Longrightarrow\sz^{j+1}(\oz)\in\bigcup_{u\in\Pz}\sz\lf(\Oz^{(u)}\r)\\
&\Longrightarrow m_0\lf(\sz^{j+1}(\oz)\r)=0
\Longrightarrow\prod_{j\in\nn}m_0\lf(\sz^j(\oz)\r)=0,
\end{align*}
where we used $j+1\in\nn$ to obtain the last `$\Longrightarrow$'. If $j\in\zz\backslash\zz_+$,
then $-j\in\nn$ and hence $\oz\in \Oz_\Sigma$. Thus, $\widehat{\phi}(\oz)=1$. Furthermore,
for any $k\in\nn$, $(k-j)\in[2,\fz)\cap\nn$. Then we have
\begin{align*}
\oz\in\bigcup_{u\in\Pz}\rho^j\lf(\Oz^{(u)}\r)
&\Longrightarrow\sz^k(\oz)\in\bigcup_{u\in\Pz}\sz^{k-j}\lf(\Oz^{(u)}\r),\ \forall\,k\in\nn\\
&\Longrightarrow m_0\lf(\sz^k(\oz)\r)=1,\ \forall\,k\in\nn
\Longrightarrow\prod_{k\in\nn}m_0\lf(\sz^k(\oz)\r)=1.
\end{align*}
Both cases give us
\begin{align*}
\widehat{\phi}(\oz)=\prod_{j\in\nn}m_0\lf(\sz^j(\oz)\r),
\end{align*}
which completes the proof of \eqref{4e12} and hence of \eqref{4e9}.

\emph{Substep 2.2}. In this substep, we show that the function $\phi$ defined by \eqref{4e8'} is a scaling
function for the set
\begin{align}\label{4e13}
V_0:=\bigoplus_{\ell\in\zz\backslash\zz_+}W_\ell
:=\bigoplus_{\ell\in\zz\backslash\zz_+}\overline{\text{span}\lf\{\psi_{\ell,n}^{(u)}:\ n\in\Lz,u\in\Pz\r\}}.
\end{align}
For this purpose, we only need to prove that
\begin{align}\label{4e14}
V_0=\widetilde{V}_0
:=\overline{\text{span}\lf\{\phi(\cdot\mz n):\ n\in\Lz\r\}}.
\end{align}

From \eqref{3e9'}, \eqref{4e10}, \eqref{4e9}, we deduce that, for any $u\in\Pz$, $j\in[2,\fz)\cap\nn$ and
almost every $\oz\in G^*$,
\begin{align}\label{4e16}
\lf(\psi^{(u)}_{-j,0}\r)^\wedge(\oz)&=p^{j/2}\widehat{\psi^{(u)}}\lf(\rho^j(\oz)\r)\\
&=p^{j/2}m_1^{(u)}\lf(\rho^{j-1}(\oz)\r)\widehat{\phi}\lf(\rho^{j-1}(\oz)\r)\noz\\
&=p^{j/2}m_1^{(u)}\lf(\rho^{j-1}(\oz)\r)m_0\lf(\rho^{j-2}(\oz)\r)\widehat{\phi}\lf(\rho^{j-2}(\oz)\r)\noz\\
&=p^{j/2}m_1^{(u)}\lf(\rho^{j-1}(\oz)\r)
\prod_{\ell\in[0,j-2]\cap\zz_+}m_0\lf(\rho^{\ell}(\oz)\r)\widehat{\phi}(\oz)\noz\\
&=m_j^{(u)}(\oz)\widehat{\phi}(\oz),\noz
\end{align}
where
\begin{align*}
m_j^{(u)}(\oz)
:=p^{j/2}m_1^{(u)}\lf(\rho^{j-1}(\oz)\r)\prod_{\ell\in[0,j-2]\cap\zz_+}m_0\lf(\rho^{\ell}(\oz)\r).
\end{align*}
Observe that each element of $\{m_0,m_1^{(u)}:\ u\in\Pz\}$ is a $\Lz$-periodic function with an upper bound
of 1, and the fact that $\rho(n)\in\Lz$ for any $n\in\Lz$. We immediately obtain that, for any $u\in\Pz$
and $j\in[2,\fz)\cap\nn$, $m_j^{(u)}\in L^2(U^*)$ and
\begin{align*}
m_j^{(u)}(\oz\pz n)&=p^{j/2}m_1^{(u)}\lf(\rho^{j-1}(\oz)\pz\rho^{j-1}(n)\r)
\prod_{\ell\in[0,j-2]\cap\zz_+}m_0\lf(\rho^{\ell}(\oz)\pz\rho^{\ell}(n)\r)\\
&=p^{j/2}m_1^{(u)}\lf(\rho^{j-1}(\oz)\r)\prod_{\ell\in[0,j-2]\cap\zz_+}m_0\lf(\rho^{\ell}(\oz)\r)\\
&=m_j^{(u)}(\oz)\quad\text{for\ almost\ every}\ \oz\in G^*.
\end{align*}
Therefore, every $m_j^{(u)}$ is a $\Lz$-periodic $L^2(U^*)$-function. This, together with \eqref{4e16}
and Lemma \ref{4l2}(i), implies that, for any $u\in\Pz$ and $j\in[2,\fz)\cap\nn$,
$\psi^{(u)}_{-j,0}\in\widetilde{V}_0$. In addition, by \eqref{4e10} and and Lemma \ref{4l2}(i) again,
it is easy to see that, for almost every $\oz\in G^*$,
\begin{align*}
\lf(\psi^{(u)}_{-1,0}\r)^\wedge(\oz)=p^{1/2}\widehat{\psi^{(u)}}\lf(\rho(\oz)\r)
=p^{1/2}m_1^{(u)}(\oz)\widehat{\phi}(\oz).
\end{align*}
Thus,
$$\lf\{\psi_{\ell,0}^{(u)}:\ \ell\in\zz\backslash\zz_+,u\in\Pz\r\}\subset \widetilde{V}_0,$$
which, combined with the definition of $\widetilde{V}_0$ [see \eqref{4e14}], further implies that
$$\lf\{\psi_{\ell,n}^{(u)}:\ \ell\in\zz\backslash\zz_+,n\in\Lz,u\in\Pz\r\}\subset\widetilde{V}_0$$
since $\widetilde{V}_0$ is invariant under $\Lz$-translation. By this and \eqref{4e13}, we conclude that
\begin{align*}
V_0=\bigoplus_{\ell\in\zz\backslash\zz_+}W_\ell\subset\widetilde{V}_0.
\end{align*}

We next show the converse inclusion, that is, $\widetilde{V}_0\subset V_0.$ To this end,
by the almost mutual disjointness of
$\{\sz(\Oz^{(u)})\}_{u\in\Pz}$ and the definitions of $\{m_0,m_1^{(u)}:\ u\in\Pz\}$, we know that,
for any $u,\tau\in\Pz$ with $u\neq\tau$ and almost every $\oz\in G^*$,
\begin{align}\label{4e17}
m_0(\oz)\overline{m^{(u)}_1(\oz)}+\sum_{x\in\Pz}m_0(\oz\pz0.x)\overline{m^{(u)}_1(\oz\pz0.x)}=0
\end{align}
and
\begin{align*}
m^{(u)}_1(\oz)\overline{m^{(\tau)}_1(\oz)}+
\sum_{x\in\Pz}m^{(u)}_1(\oz\pz0.x)\overline{m^{(\tau)}_1(\oz\pz0.x)}=0,
\end{align*}
where $0.x:=(\nu_j)_{j\in\zz}$ is the element of $G^*$ satisfying that $\nu_1=x\in\Pz$
and $\nu_j=0$ for any $j\in\zz\setminus\{1\}$.

Moreover, by \eqref{4e9} and the $\Lz$-periodicity of $m_0$, we find that, for almost every $\oz\in G^*$
and any $n\in\Lz$,
\begin{align*}
m_0(\oz)\widehat{\phi}(\oz\pz n)=m_0(\oz\pz n)\widehat{\phi}(\oz\pz n)=\widehat{\phi}(\rho(\oz\pz n)).
\end{align*}
From this, the mutual orthogonality of $\{\phi(\cdot\mz n)\}_{n\in\Lz}$ and Lemma \ref{2l2},
we infer that, for almost every $\oz\in G^*$,
\begin{align}\label{4e18}
&\sum_{n\in\Lz}\lf|m_0(\oz)\r|^2\lf|\widehat{\phi}(\oz\pz n)\r|^2\\
&\quad=\sum_{n\in\Lz}\lf|\widehat{\phi}(\rho(\oz\pz n))\r|^2
=\sum_{n\in\Lz_1}\lf|\widehat{\phi}(\rho\oz\pz n)\r|^2\noz\\
&\quad=\sum_{n\in\Lz}\lf|\widehat{\phi}(\rho\oz\pz n)\r|^2
-\sum_{x\in\Pz}\sum_{n\in\Lz_1}\lf|\widehat{\phi}(\rho\oz\pz n\pz x.0)\r|^2\noz\\
&\quad=1-\sum_{x\in\Pz}\sum_{n\in\Lz_1}\lf|\widehat{\phi}(\rho(\oz\pz \sz(n)\pz 0.x))\r|^2\noz\\
&\quad=1-\sum_{x\in\Pz}\sum_{n\in\Lz}\lf|\widehat{\phi}(\rho(\oz\pz n\pz 0.x))\r|^2\noz\\
&\quad=1-\sum_{x\in\Pz}\sum_{n\in\Lz}\lf|\widehat{\phi}(\oz\pz n\pz 0.x)\r|^2\lf|m_0(\oz+0.x)\r|^2\noz\\
&\quad=1-\sum_{x\in\Pz}\lf|m_0(\oz+0.x)\r|^2,\noz
\end{align}
where $0.x$ is as in \eqref{4e17} and $x.0:=(\nu_j)_{j\in\zz}$ is the element of $G^*$ satisfying
that $\nu_0=x\in\Pz$ and $\nu_j=0$ for any $j\in\zz\setminus\{0\}$. Therefore, for almost every $\oz\in G^*$,
\begin{align*}
\lf|m_0(\oz)\r|^2+\sum_{x\in\Pz}\lf|m_0(\oz+0.x)\r|^2=1.
\end{align*}
In addition, using the Parseval equality (see, for instance, \cite[Proposition 1(c)]{Far07}), \eqref{3e2'},
we obtain that, for any given $u\in\Pz$ and any $m,r\in\Lz$,
\begin{align*}
&\lf(\psi^{(u)}(\cdot\mz m),\psi^{(u)}(\cdot\mz r)\r)\\
&\quad=\lf(\lf(\psi^{(u)}(\cdot\mz m)\r)^\wedge,\lf(\psi^{(u)}(\cdot\mz r)\r)^\wedge\r)\\
&\quad=\int_{G^*}\lf(\psi^{(u)}(\cdot\mz m)\r)^\wedge(\oz)
\overline{\lf(\psi^{(u)}(\cdot\mz r)\r)^\wedge(\oz)}\,d\mu^*(\oz)\\
&\quad=\int_{G^*}\overline{\chi(m,\oz)}\lf(\psi^{(u)}\r)^\wedge(\oz)
\chi(r,\oz)\overline{\lf(\psi^{(u)}\r)^\wedge(\oz)}\,d\mu^*(\oz)\\
&\quad=\int_{\Oz^{(u)}}\overline{\chi(m,\oz)}\chi(r,\oz)\,d\mu^*(\oz).
\end{align*}
By this, Theorem \ref{t1}(ii) and the mutual orthogonality of
$\{\chi(r,\cdot)\}_{r\in\Lz}$ on $U^*$ (see, for instance, \cite[p.\,311]{Far08}), it is easy to see that,
for any given $u\in\Pz$ and any $m,r\in\Lz$,
\begin{align*}
\lf(\psi^{(u)}(\cdot\mz m),\psi^{(u)}(\cdot\mz r)\r)
&=\sum_{n\in N^{(u)}}\int_{\Oz_n^{(u)}}\overline{\chi\lf(m,\oz\r)}\chi\lf(r,\oz\r)\,d\mu^*(\oz)\\
&=\sum_{n\in N^{(u)}}\int_{\Oz_n^{(u)}\mz n}\overline{\chi\lf(m,\oz\r)}\chi\lf(r,\oz\r)\,d\mu^*(\oz)\\
&=\int_{U^*}\overline{\chi\lf(m,\oz\r)}\chi\lf(r,\oz\r)\,d\mu^*(\oz)\\
&=\begin{cases}
1
&\text{if}\ m=r,\\
0
&\text{if}\ m\neq r.
\end{cases}
\end{align*}
Thus, for any given $u\in\Pz$, $\{\psi^{(u)}(\cdot\mz n)\}_{n\in\Lz}$ forms an orthonormal set, which
together with Lemma \ref{2l2}, further implies that, for almost every $\oz\in G^*$,
\begin{align*}
\sum_{n\in\Lz}\lf|\widehat{\psi^{(u)}}(\oz\pz n)\r|^2=1.
\end{align*}
Via this and a calculation parallel to \eqref{4e18}, we conclude that, for any given $u\in\Pz$ and
almost every $\oz\in G^*$,
\begin{align*}
\lf|m^{(u)}_1(\oz)\r|^2+\sum_{x\in\Pz}\lf|m^{(u)}_1\lf(\oz\pz0.x\r)\r|^2=1.
\end{align*}

On the other hand, from \eqref{4e9}, \eqref{4e10}, the $\Lz$-periodicity of $\{m_0,m_1^{(u)}:\ u\in\Pz\}$
and \eqref{4e17}, it follows that, for any given $u\in\Pz$ and almost every $\oz\in G^*$,
\begin{align*}
&\sum_{n\in\Lz}\widehat{\phi}(\oz\pz n)\overline{\widehat{\psi^{(u)}}(\oz\pz n)}\\
&\quad=\sum_{n\in\Lz}m_0(\sz(\oz\pz n))\overline{m^{(u)}_1(\sz(\oz\pz n))}
\lf|\widehat{\phi}(\sz(\oz\pz n))\r|^2\\
&\quad=\sum_{n\in\Lz_1}m_0(\sz(\oz\pz n))\overline{m^{(u)}_1(\sz(\oz\pz n))}
\lf|\widehat{\phi}(\sz(\oz\pz n))\r|^2\\
&\qquad+\sum_{x\in\Pz}\sum_{n\in\Lz_1}m_0(\sz(\oz\pz n\pz x.0))\overline{m^{(u)}_1(\sz(\oz\pz n \pz x.0))}
\lf|\widehat{\phi}(\sz(\oz\pz n \pz x.0))\r|^2\\
&\quad=\sum_{n\in\Lz}m_0(\sz(\oz))\overline{m^{(u)}_1(\sz(\oz))}
\lf|\widehat{\phi}(\sz(\oz)\pz n)\r|^2\\
&\qquad+\sum_{x\in\Pz}\sum_{n\in\Lz}m_0(\sz(\oz)\pz 0.x)\overline{m^{(u)}_1(\sz(\oz)\pz 0.x)}
\lf|\widehat{\phi}(\sz(\oz\pz x.0) \pz n)\r|^2\\
&\quad=m_0(\sz(\oz))\overline{m^{(u)}_1(\sz(\oz))}
+\sum_{x\in\Pz}m_0(\sz(\oz)\pz 0.x)\overline{m^{(u)}_1(\sz(\oz)\pz 0.x)}\\
&\quad=0,
\end{align*}
where, for any $x\in\Pz$, $0.x$ and $x.0$ are as in \eqref{4e18}. By this, \eqref{4e9} and
the $\Lz$-periodicity of $m_0$ again,
we find that, for any given $u\in\Pz$ and for any $j\in\zz_+$ and almost every $\oz\in G^*$,
\begin{align*}
\sum_{n\in\Lz}\widehat{\phi}\lf(\rho^j(\oz\pz n)\r)\overline{\widehat{\psi^{(u)}}(\oz\pz n)}
&=\sum_{n\in\Lz}\prod_{\ell\in[0,j-1]\cap\nn}m_0\lf(\rho^\ell(\oz\pz n)\r)
\widehat{\phi}(\oz\pz n)\overline{\widehat{\psi^{(u)}}(\oz\pz n)}\\
&=\prod_{\ell\in[0,j-1]\cap\nn}m_0\lf(\rho^\ell(\oz)\r)
\sum_{n\in\Lz}\widehat{\phi}(\oz\pz n)\overline{\widehat{\psi^{(u)}}(\oz\pz n)}\\
&=0.
\end{align*}
Combining this and the Parseval equality, we obtain that, for any given $u\in\Pz$ and for any $j\in\zz_+$
and $k\in\Lz$,
\begin{align*}
\lf(\phi,\psi^{(u)}_{j,k}\r)
&=\lf(\widehat{\phi},\widehat{\psi^{(u)}_{j,k}}\r)\\
&=\int_{G^*}\widehat{\phi}(\oz)\overline{\widehat{\psi^{(u)}_{j,k}}(\oz)}\,d\mu^*(\oz)\\
&=\int_{G^*}\widehat{\phi}(\oz)\overline{\int_Gp^{j/2}
\psi^{(u)}\lf(\rho^j(x)\mz k\r)\overline{\chi(x,\oz)}\,dx}\,d\mu^*(\oz)\\
&=\int_{G^*}\widehat{\phi}(\oz)\overline{\int_Gp^{j/2}p^{-j}
\psi^{(u)}\lf(z\r)\overline{\chi\lf(\sz^j(z\pz k),\oz\r)}\,dz}\,d\mu^*(\oz)\\
&=p^{-j/2}\int_{G^*}\widehat{\phi}(\oz)\overline{\int_G\psi^{(u)}(z)
\overline{\chi\lf(z,\sz^j(\oz)\r)}\,\overline{\chi\lf(k,\sz^j(\oz)\r)}\,dz}\,d\mu^*(\oz)\\
&=p^{-j/2}\int_{G^*}\widehat{\phi}(\oz)\overline{\widehat{\psi^{(u)}}\lf(\sz^j(\oz)\r)}
\chi\lf(k,\sz^j(\oz)\r)\,d\mu^*(\oz)\\
&=p^{j/2}\int_{G^*}\widehat{\phi}\lf(\rho^j(\oz)\r)\overline{\widehat{\psi^{(u)}}(\oz)}
\chi(k,\oz)\,d\mu^*(\oz)\\
&=p^{j/2}\int_{U^*}\sum_{n\in\Lz}\widehat{\phi}\lf(\rho^j(\oz\pz n)\r)
\overline{\widehat{\psi^{(u)}}(\oz\pz n)}\chi(k,\oz\pz n)\,d\mu^*(\oz)\\
&=0.
\end{align*}
Therefore, for any $j\in\zz_+$,
$$\phi\perp W_j:=\overline{\text{span}\lf\{\psi_{j,n}^{(u)}:\ n\in\Lz,u\in\Pz\r\}}.$$
Observe that each $W_j$ is invariant under $\Lz$-translation. Then we have
\begin{align*}
\overline{\text{span}\lf\{\phi(\cdot\mz n):\ n\in\Lz\r\}}\subset
\bigoplus_{j\in\zz\backslash\zz_+}\overline{\text{span}\lf\{\psi_{j,n}^{(u)}:\ n\in\Lz,u\in\Pz\r\}},
\end{align*}
which gives $\widetilde{V}_0\subset V_0$. This proves \eqref{4e14} and hence $\phi$ is a scaling
function for $V_0$ defined in \eqref{4e13}.

Obviously, for any $u\in\Pz$, $\psi^{(u)}$ is obtained from the scaling function $\phi$ and the
scaling filter $m_0$ and wavelet filter $m_1^{(u)}$, that is, $\psi^{(u)}$ is associated with an MRA.
This finishes the proof of Theorem \ref{t2}.
\end{proof}

\begin{remark}\label{4r}
When $p=2$, the Vilenkin group $G$ used in present article goes back to the Cantor dyadic group
(see, for instance, \cite{Lang96,ms21}). Thus,
Theorem \ref{t2} includes \cite[Theorem 6]{ms21} as a special case.
\end{remark}

\medskip

\noindent  Jun Liu and Chi Zhang (Corresponding author)

\smallskip

\noindent School of Mathematics, JCAM,
China University of Mining and Technology,
Xuzhou 221116, The People's Republic of China

\smallskip

\noindent {\it E-mails}: \texttt{junliu@cumt.edu.cn} (J. Liu)

\noindent\phantom{{\it E-mails:} }\texttt{zclqq32@cumt.edu.cn} (C. Zhang)

\end{document}